\newcommand{\ignore}[1]{}
\renewcommand{\Im}{\operatorname{Im}}
\newcommand{\abs}[1]{\left\lvert {#1} \right\rvert}
\newcommand{\sabs}[1]{\lvert {#1} \rvert}
\newcommand{\snorm}[1]{\lVert {#1} \rVert}
\newcommand{\C}{{\mathbb{C}}}
\newcommand{\R}{{\mathbb{R}}}
\newcommand{\sV}{{\mathcal{V}}}
\newcommand{\rank}{\operatorname{rank}}
\newtheorem{thm}{Theorem}[section]
\newtheorem{prop}[thm]{Proposition}
\newtheorem{cor}[thm]{Corollary}
\newtheorem{lemma}[thm]{Lemma}
\newtheorem{claim}[thm]{Claim}
\theoremstyle{definition}
\newtheorem{example}[thm]{Example}
\theoremstyle{remark}
\newtheorem{remark}[thm]{Remark}
\newcommand{\avoidbreak}{\postdisplaypenalty=100}
\author{Ji\v{r}\'{\i} Lebl}
\address{Department of Mathematics, Oklahoma State University,
Stillwater, OK 74078, USA}
\email{lebl@math.okstate.edu}
\author{Alan Noell}
\address{Department of Mathematics, Oklahoma State University,
Stillwater, OK 74078, USA}
\email{noell@math.okstate.edu}
\author{Sivaguru Ravisankar}
\address{Tata Institute of Fundamental Research, Centre for Applicable Mathematics, Bengaluru 560065, India}
\email{sivaguru@tifrbng.res.in}
\date{April 14, 2021}
\title{A CR singular analogue of Severi's theorem}
\keywords{CR singular, holomorphic hull, CR function}
\subjclass[2010]{32V40 (Primary),  32V25 32E05 (Secondary)}
\begin{document}

\begin{abstract}
Real-analytic CR functions on real-analytic CR singular submanifolds 
are not in general restrictions of holomorphic functions, unlike in the CR
nonsingular case.
We give a simple condition
that completely characterizes those quadric CR singular manifolds
of codimension 2 in $\C^{n+1}$ for which an extension result holds.
Consequently, we obtain an extension result for general real-analytic
CR singular submanifolds of codimension 2.
As applications we give a condition for the flattening of such submanifolds,
and we classify CR singular images of CR submanifolds up to second order.
\end{abstract}

\maketitle



\section{Introduction} \label{section:intro}

Given a submanifold $M$ of $\C^{n+1}$, it is natural to ask  when a function
$f$ defined on $M$ is a restriction to $M$ of a holomorphic function.
A holomorphic function must satisfy the Cauchy--Riemann (CR) equations, and hence
$f$ must satisfy the CR equations in the directions that are tangent to
$M$.  A submanifold is CR if the CR vectors tangent to $M$ give rise to
a vector bundle on $M$; otherwise, $M$ is CR singular.

A classical result of Severi~\cite{Severi:31} is that,
given a real-analytic CR submanifold
$M$ and a real-analytic CR function $f$ defined on $M$, then $f$ is locally the
restriction of a holomorphic function.  For CR singular submanifolds this
theorem does not hold in general.  Every hypersurface is CR, so we
consider the lowest codimension where CR singularities arise, that is,
codimension 2.
A codimension-2 CR singular submanifold $M$ in $\C^{n+1}$ 
with a CR singularity at the origin can
locally, after a rotation by a unitary,
  be written as
\begin{equation} \label{eq:eq1}
w = \rho(z,\bar{z})
\end{equation}
for coordinates $(z,w) \in \C^n \times \C$, where $\rho \in O(\snorm{z}^2)$.
Note that $\rho$ is not necessarily real-valued, and
the complex equation \eqref{eq:eq1} is two real equations.
We will consider a function $f$ to be 
a CR function if it is killed by any CR vector field defined on $M$ (see
\S\ref{sec:prelim}).

A condition on $M$ to guarantee extension for all CR functions
is only possible when $n \geq 2$.
If $n=1$, $M$ has no CR structure outside the origin.
When $\rho$ has
any nonholomorphic quadratic term, we write $M$ as (see~\cite{Bishop65})
\begin{equation}
w = \sabs{z}^2 + \lambda (z^2+\bar{z}^2) + E(z,\bar{z}),
\end{equation}
where $E \in O(\sabs{z}^3)$ and $0 \leq \lambda \leq \infty$
(with $\infty$ interpreted appropriately as $w = z^2+\bar{z}^2 + E$).
Because the manifold has no CR structure outside the origin, the function $\bar{z}$ is vacuously a CR function,
and it never extends to a holomorphic function on a neighborhood of the origin.
It is possible to add extra conditions on both $M$ and $f$
to guarantee an extension,
as we did in \cite{crext1}, but that is not the purpose of the present
paper.

Harris~\cite{Harris} proved 
a necessary and sufficient condition on $f$
for the extension to hold on general $M$,
but the condition is difficult to verify.
In Lebl--Minor--Shroff--Son--Zhang~\cite{LMSSZ} it was proved that if $M$
is an image of a CR submanifold, then no extension result holds.
Given these two results it is perhaps surprising that an extension
result, without extra conditions on $f$, holds generically.

We have previously studied the
extension result when $M$ is flat, a subset of $\C^n \times \R$,
in a series of papers
\cites{crext1,crext2,crext3,crext4} for various $n$ and various regularities.
In this work, we study the real-analytic nonflat case for $n\geq 2$,
dropping the nondegeneracy condition and finding instead  
necessary and sufficient conditions on the quadratic terms of $\rho$
that allow an extension result.

Real codimension-2 CR singular submanifolds were
first studied in $\C^2$ ($n=1$) by
E.~Bishop~\cite{Bishop65}.  The work, focused mostly on the normal form,
was extended by
Moser--Webster~\cite{MoserWebster83},
Moser~\cite{Moser85},
Kenig--Webster
\cites{KenigWebster:82}, Gong~\cite{Gong94:duke},
Huang--Krantz~\cite{HuangKrantz95}, 
Huang--Yin~\cite{HuangYin09}, Slapar~\cite{Slapar:16}, and many others.

For $n\geq 2$ the work is more recent, again focusing mostly on 
the normal form.
See Huang--Yin~\cites{HuangYin09:codim2,HuangYin:flattening1,HuangYin:flattening2},
Gong--Lebl~\cite{GongLebl}, Coffman~\cites{Coffman}, and
Burcea~\cites{Burcea,Burcea2}.
In particular, for $n\geq 2$ it is not possible in general to flatten $M$,
that is, to change
variables to realize $M$ locally
as a submanifold of $\C^n \times \R$.
See Dolbeault--Tomassini--Zaitsev~\cites{DTZ,DTZ2},
Huang--Yin~\cites{HuangYin:flattening1,HuangYin:flattening2}, and
Fang--Huang~\cites{FangHuang}.  The flattening can be obtained
as a holomorphic extension of a function that is the first integral
of the singular foliation by CR orbits (if it exists).

Our main result is the optimal condition on the quadratic part of $\rho$
to guarantee extension.  We state our results for $n \geq 2$,
although they would hold vacuously for $n=1$.  In the theorems, consider
$z$ as a column vector, $z^t$ the transpose, and $z^*$ the conjugate
transpose.

\begin{thm}\label{thm:mainext}
Let $(z,w) \in \C^n \times \C$, $n \geq 2$, be the coordinates and,
near the origin, let $M \subset \C^{n+1}$ be a codimension-2
submanifold given by
\begin{equation} \label{eq:formofM}
w = \rho(z,\bar{z}) = Q(z,\bar{z}) + E(z,\bar{z}) =
z^* A z + \overline{z^t B z} + z^t C z +
E(z, \bar{z}),
\end{equation}
where $\rho$ is real-analytic, 
$A,B,C$ are complex $n \times n$ matrices,  $B$ and $C$ are symmetric,
and $E$ is
$O(\snorm{z}^3)$.
Assume 
\begin{equation} \label{eq:theABcondition}
   \rank \begin{bmatrix} A^* \\ B \end{bmatrix} \geq 2 .
\end{equation}
   Suppose $f(z,\bar{z})$ is a real-analytic function defined near the origin
   that, when considered as a function on $M$ (parametrized by $z$), is a CR
   function on $M_{\mathit{CR}}$.
   Then there exists a unique holomorphic function $F(z,w)$ defined near the
   origin such that $f$ and $F$ agree on $M$, that~is,
   \begin{equation}
     f(z,\bar{z}) = F\bigl(z, \rho(z,\bar{z}) \bigr) .
   \end{equation}
\end{thm}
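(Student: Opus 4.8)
The plan is to pass to the complexification, reduce the statement to a first-integral problem for the holomorphic ``fibration'' $\zeta \mapsto \rho(z,\zeta)$, and isolate the role of \eqref{eq:theABcondition} as exactly the condition that the relevant fibers are connected. Write $f(z,\zeta)$ and $\rho(z,\zeta)$ for the holomorphic functions on a neighborhood of the origin in $\C^n \times \C^n$ obtained by replacing $\bar{z}$ with an independent variable $\zeta$. First I would identify the CR condition on $M_{\mathit{CR}}$ with the vanishing of the $2\times 2$ minors
\begin{equation}
  \frac{\partial f}{\partial \zeta_j}\,\frac{\partial \rho}{\partial \zeta_k}
  - \frac{\partial f}{\partial \zeta_k}\,\frac{\partial \rho}{\partial \zeta_j} = 0
  \qquad (1 \le j,k \le n),
\end{equation}
which is the complexification of the statement that every CR vector field $\sum_j a_j \partial_{\bar{z}_j}$ tangent to $M$ (those with $\sum_j a_j \rho_{\bar{z}_j}=0$) annihilates $f$. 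Equivalently, $\nabla_\zeta f$ is pointwise proportional to $\nabla_\zeta \rho$, so that for each fixed $z$ the function $f$ is infinitesimally constant along the level sets of $\zeta \mapsto \rho(z,\zeta)$. The goal becomes to produce a holomorphic $F(z,w)$ near the origin with $f(z,\zeta) = F\bigl(z,\rho(z,\zeta)\bigr)$ and then to restrict to $\zeta = \bar{z}$.

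Next I would descend $f$ through the map $\pi(z,\zeta) = \bigl(z,\rho(z,\zeta)\bigr)$. Away from its critical set $\{\nabla_\zeta \rho = 0\}$ and away from $\{z=0\}$, $\pi$ is a submersion, and the proportionality of gradients shows that $f$ is locally constant on the fibers $\pi^{-1}(z,w)$; hence $f$ descends to a single-valued local holomorphic function of $(z,w)$ precisely when these fibers are connected, so that local constancy upgrades to one well-defined value. This is the step that detects \eqref{eq:theABcondition}. For the quadric part the fiber is the affine quadric $\{\,\zeta^t \bar{B}\,\zeta + (Az)^t\zeta + (z^tCz - w) = 0\,\}$ in $\C^n$, and for $n \ge 2$ such a quadric fails to be connected only when it degenerates into two distinct parallel hyperplanes, that is, only when $\zeta^t \bar{B}\,\zeta + (Az)^t\zeta$ is a function of a single linear form $u^t\zeta$. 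A direct computation identifies this degenerate case with $\rank\!\begin{bmatrix} A^* \\ B \end{bmatrix} \le 1$, so \eqref{eq:theABcondition} is exactly the hypothesis that the generic fiber is connected. Establishing connectedness for the full $\rho = Q + E$, not merely its quadratic part $Q$, on a suitable neighborhood basis of the origin is the step I expect to be the main obstacle; I would prove it as a separate lemma, treating $E$ as a higher-order perturbation that cannot create new components arbitrarily close to the origin.

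With connectedness in hand, $f$ descends to a holomorphic $F(z,w)$ on an open set whose complement, near the origin, is contained in $\{z=0\}$ together with the proper set of critical values of $\pi$. To obtain $F$ on a full neighborhood I would extend it across these thin sets: $F$ is locally bounded because it takes only values of $f$, so it extends across the (proper) critical-value set by the Riemann removable-singularity theorem, and it extends across $\{z=0\}$ by Hartogs' theorem, since $\{z=0\}$ has complex codimension $n \ge 2$ in $\C^{n+1}$. This is the one place where the hypothesis $n\ge 2$ is essential, and it is what lets the construction ``see through'' the CR singularity at the origin. The identity $f(z,\zeta) = F\bigl(z,\rho(z,\zeta)\bigr)$ then holds on a dense open set and hence everywhere by analytic continuation; setting $\zeta = \bar{z}$ yields the desired extension.

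Uniqueness is the easy part. If $F$ is holomorphic near the origin and $F\bigl(z,\rho(z,\zeta)\bigr) \equiv 0$, then for generic fixed small $z$ the map $\zeta \mapsto \rho(z,\zeta)$ is nonconstant, since its $\zeta$-gradient $Az + 2\bar{B}\zeta$ is not identically zero once \eqref{eq:theABcondition} holds (if $\bar{B}=0$ then $A$ has rank $\ge 2$, so $Az \neq 0$ for generic $z$, and otherwise $\bar{B}\neq 0$). Such a map is open, so $F(z,\cdot)$ vanishes on an open subset of $\C$ and therefore identically; letting $z$ vary gives $F \equiv 0$.
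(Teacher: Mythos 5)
Your complexification of the CR condition (proportionality of $\nabla_\zeta f$ and $\nabla_\zeta \rho$) and the idea of descending $f$ through $\pi(z,\zeta)=\bigl(z,\rho(z,\zeta)\bigr)$ are sound, but the step where you identify the hypothesis \eqref{eq:theABcondition} with connectedness of the generic fiber is false, and the error propagates into your extension step. Take the exceptional quadric $w=\bar{z}_1z_2$ (case (2) of Theorem~\ref{thm:PolyExtn}(d)): there $B=0$ and the fibers $\{\zeta:\zeta_1z_2=w\}$ are affine hyperplanes, hence connected, yet $\rank\left[\begin{smallmatrix}A^*\\ B\end{smallmatrix}\right]=1$ and extension fails ($f=\bar{z}_1$ is CR and does not extend, Example~\ref{ex:LinExtnFails}). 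Indeed, whenever $B=0$ the fibers are hyperplanes and are connected regardless of the rank of $A$, so connectedness cannot be the mechanism that detects \eqref{eq:theABcondition}. What actually fails for $w=\bar{z}_1z_2$ is not connectedness but the size of the image of $\pi$: the descended function is $F=w/z_2$, defined only on the horn $\{\lvert w\rvert<r\lvert z_2\rvert\}$, which pinches along the codimension-one set $\{z_2=0\}$.

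This same issue is a genuine gap in your positive direction. Your claim that $F$ is defined ``on an open set whose complement, near the origin, is contained in $\{z=0\}$ together with the critical values of $\pi$'' is false whenever $B=0$, which \eqref{eq:theABcondition} allows (then $\rank A\ge 2$). For $w=\lvert z_1\rvert^2+\lvert z_2\rvert^2$ the fiber over $(z,w)$ is $\{\zeta:\zeta_1z_1+\zeta_2z_2=w\}$, which meets the polydisc $\{\lvert\zeta\rvert<r\}$ only when $\lvert w\rvert\lesssim r\lvert z\rvert$; the image of $\pi$ is a horn pinching along $\{z=0\}$, and its complement has nonempty interior accumulating at the origin. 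Riemann removable singularities and Hartogs extension need a function holomorphic on a full neighborhood minus an analytic set, so they do not apply here; nor does boundedness rescue the step, since $w/z_1$ is bounded and holomorphic on the horn $\{\lvert w\rvert<\lvert z_1\rvert\}\subset\C^2$ and has no extension across the origin. To close this case you must use $\rank A\ge 2$ in an essentially new way---for instance, expand $F(z,w)=\sum_k a_k(z)w^k$, extend each $a_k$ across the codimension-$\ge 2$ set $\{Az=0\}$, and prove uniform Cauchy estimates via the maximum principle on circles avoiding $\{Az=0\}$---none of which is in your proposal. (It is no accident that the paper also splits the argument: Weierstrass division works only when $B\neq 0$, Lemma~\ref{lem:PolyExtnDE}, and a separate rank-counting argument handles $B=0$, Lemma~\ref{lem:PolyExtnMa}.) Finally, the connectedness of the fibers of the full $\rho=Q+E$, which you defer to a lemma, is an additional nontrivial obstacle rather than a routine perturbation, because the $\zeta$-polydisc has fixed radius while $(z,w)\to 0$, so $E$ is not uniformly dominated by $Q$ on the fibers.
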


Any real-analytic codimension-2 CR singular submanifold
can be put, via a linear change of
coordinates, into the form \eqref{eq:formofM}, and therefore the
condition \eqref{eq:theABcondition} is the only hypothesis in the theorem.
As we shall see below, the condition \eqref{eq:theABcondition} is
necessary and sufficient for the quadric models.
In other words, the condition \eqref{eq:theABcondition} is the most
general hypothesis when considering only the quadratic part of $\rho$.

We are interested in those models $w = Q(z,\bar{z})$
that are CR singular and thus not complex manifolds.
That is, we are interested in those models where $Q$ depends
on $\bar{z}$, or equivalently, when
$A$ and $B$ are not both zero, so when
$\rank \left[ \begin{smallmatrix} A^* \\ B \end{smallmatrix} \right] \geq 1$.
We express 
this condition as $\bar{\partial} Q \not\equiv 0$.

As we noted, \eqref{eq:theABcondition} is
a necessary and sufficient condition
for extension on CR singular quadric models.
In fact, on the quadric models
the condition is equivalent to checking extension
for real-linear functions of $z$ (and $\bar{z}$).  More precisely,
we have the following theorem, which
is in fact the key step in the proof of the result above.
   
\begin{thm}\label{thm:PolyExtn}
Let $M \subset \C^{n+1}$, $n \geq 2$, be a quadric given
in coordinates $(z,w) \in \C^n \times \C$
by
\begin{equation}
w = Q(z,\bar{z}) =
z^* A z + \overline{z^t B z} + z^t C z ,
\end{equation}
where
$A,B,C$ are complex $n \times n$ matrices with $B$ and $C$ symmetric.
Assume that $\bar{\partial} Q \not\equiv 0$.

The following are equivalent.
 \begin{enumerate}[(a)]
  \item
   $\rank \begin{bmatrix} A^* \\ B \end{bmatrix} \geq 2$.
  \item
   Suppose $f(z,\bar{z})$ is a polynomial that, when considered as a
   function on $M$ (parametrized by $z$), is a CR function on $M_{\mathit{CR}}$.
   Then there exists a unique holomorphic polynomial $F(z,w)$ such that $f$ and $F$
   agree on $M$, that~is,
   \begin{equation}
     f(z,\bar{z}) = F\bigl(z, Q(z,\bar{z}) \bigr) .
   \end{equation}
   The weighted degrees of $F$ and $f$ are the same if $w$ is given weight
   two.  If $f$ is homogeneous, then $F$ is weighted homogeneous of same
   degree.
  \item
   Suppose $h(z,\bar{z})$ is a linear function that, when considered as a
   function on $M$ (parametrized by $z$), is a CR function on $M_{\mathit{CR}}$.
   Then $h$ is holomorphic, that is, it is independent of
   $\bar{z}_1,\ldots,\bar{z}_{n}$.
  \item $M$ is \emph{not} biholomorphically equivalent to one of the following
   (mutually inequivalent) exceptional cases:
   \begin{enumerate}[(1)]
     \item $w = \bar{z}_1 z_2 + \bar{z}_1^2$,
     \item $w = \bar{z}_1 z_2$,
     \item $w = \sabs{z_1}^2 + a \bar{z}_1^2$, $a \geq 0$,
     \item $w = \bar{z}_1^2$.
   \end{enumerate}
 \end{enumerate}
\end{thm}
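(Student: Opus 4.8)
The plan is to first convert the CR condition into polynomial identities, then prove $(b)\Rightarrow(c)$, $(a)\Leftrightarrow(c)$, and $(a)\Rightarrow(b)$ (which together give the equivalence of $(a)$, $(b)$, $(c)$), and finally settle $(a)\Leftrightarrow(d)$ by a normal-form computation.

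First I would compute the tangential CR vector fields. A $(0,1)$ field $L=\sum_j a_j\partial_{\bar z_j}+b\,\partial_{\bar w}$ is tangent to $M=\{w=Q\}$ exactly when $\sum_j a_j v_j=0$ and $b=\sum_j a_j\,\partial_{\bar z_j}\bar Q$, where $v:=(\partial_{\bar z_1}Q,\dots,\partial_{\bar z_n}Q)^t=Az+2\bar B\bar z$. Extending $f(z,\bar z)$ to the ambient function that is constant in $w$ gives $Lf=\sum_j a_j\partial_{\bar z_j}f$, so $f$ is CR iff $\sum_j a_j\partial_{\bar z_j}f=0$ whenever $\sum_j a_jv_j=0$; equivalently $\bar\partial_z f$ is pointwise a scalar multiple of $v$ on $M_{\mathit{CR}}$, which (clearing the scalar) is the family of polynomial identities
\begin{equation}\tag{$\star$}
v_k\,\partial_{\bar z_j}f=v_j\,\partial_{\bar z_k}f,\qquad 1\le j,k\le n.
\end{equation}
Note $z_1,\dots,z_n$ and $Q$ are themselves CR, so $\C[z,Q]$ consists of CR functions, and $F\mapsto F(z,Q)$ is injective because for generic fixed $z$ the value $Q(z,\bar z)$ fills an open subset of $\C$; this already yields the uniqueness and the weighted-degree bookkeeping of $(b)$.

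Next, $(b)\Rightarrow(c)$ is immediate: a linear CR $h$ equals some $F(z,Q)$ of weighted degree $1$, and with $w$ of weight $2$ such an $F$ cannot involve $w$, so $h$ is holomorphic. For $(a)\Leftrightarrow(c)$ I argue on negations. For linear $h$ with $\bar\partial_z h=\beta$ constant, $(\star)$ forces the constant vector $\beta$ to be a multiple of $v$ at every point, so the image of $z\mapsto Az+2\bar B\bar z$ has one-dimensional complex span. That span equals the span of the columns of $A$ together with those of $\bar B$, whose dimension is $\rank\!\begin{bmatrix}A^*\\B\end{bmatrix}$ by a conjugate-transpose identity. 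Hence a nonholomorphic linear CR function exists iff this rank equals $1$; since $\bar\partial Q\not\equiv0$ rules out rank $0$, this is exactly $\neg(c)\Leftrightarrow\neg(a)$.

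The heart is $(a)\Rightarrow(b)$. Because $(\star)$ is a graded system of identities, each homogeneous component of a CR polynomial is CR, so I may assume $f$ homogeneous and seek $F$ weighted-homogeneous. Complexifying $\bar z\rightsquigarrow\zeta$ turns $(\star)$ into $\nabla_\zeta\hat f\parallel\nabla_\zeta\hat Q$, that is, for each fixed $z$ the polynomial $\hat f(z,\cdot)$ is functionally dependent on the single polynomial $\hat Q(z,\cdot)=\zeta^tAz+\zeta^t\bar B\zeta+z^tCz$. The goal is to upgrade this to one identity $\hat f=F(z,\hat Q)$ with $F$ a holomorphic polynomial, by induction on $\deg_\zeta\hat f$, peeling off one power of $\hat Q$ at a time. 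This globalization is the main obstacle: pointwise-in-$z$ functional dependence becomes a genuine polynomial dependence with holomorphic coefficients and no denominators only when $\hat Q(z,\cdot)$ is, for generic $z$, a primitive polynomial in $\zeta$ — not a perfect square of a linear form, and not an honest single $z$-scaled linear form. Precisely here $\rank\!\begin{bmatrix}A^*\\B\end{bmatrix}\ge2$ enters: it forces that for generic $z$ the affine quadric $\hat Q(z,\cdot)$ reduces, after an affine change of $\zeta$, to a primitive form (for instance $\eta_1$, $\eta_1\eta_2$, or $\eta_1^2+\eta_2$), so that $\nabla_\zeta\hat f\parallel\nabla_\zeta\hat Q$ does give $\hat f\in\C(z)[\hat Q]$; a separate check that the coefficients stay polynomial in $z$ — again from rank $\ge2$, which excludes the spurious denominators visible in the exceptional models — completes the induction and the weighted-degree count.

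Finally, $(a)\Leftrightarrow(d)$ is a normal-form computation. Under the admissible equivalences $z\mapsto Tz$ and $w\mapsto\lambda w+(\text{holomorphic quadratic})$ one has $A\mapsto\lambda T^*AT$ and $B\mapsto\lambda T^tBT$, while $C$ may be absorbed into $w$. Assuming $\rank\!\begin{bmatrix}A^*\\B\end{bmatrix}=1$, the common one-dimensional row space is spanned by a covector that $T$ normalizes to $e_1$; symmetry of $B$ then gives $B=\beta E_{11}$ and $A$ a single nonzero row, and the residual freedom reduces $Q$ to exactly one of (1)--(4), the parameter $a\ge0$ in (3) being the only surviving modulus and the four models being separated by elementary invariants. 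Conversely each of (1)--(4) has rank $1$; thus $M$ has rank $1$ iff it is equivalent to one of (1)--(4), and since $\bar\partial Q\not\equiv0$ excludes rank $0$, this is exactly the stated equivalence $(a)\Leftrightarrow(d)$.
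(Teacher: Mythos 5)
Your preliminary reductions are sound: the identities $(\star)$ are exactly the intrinsic CR equations of Proposition~\ref{prop:formofCRvecs}, your (b)$\Rightarrow$(c) is the same weight argument the paper uses, and your (a)$\Leftrightarrow$(c)---identifying $\rank\left[\begin{smallmatrix} A^* \\ B \end{smallmatrix}\right]$ with the dimension of the complex span of the values of $v=Az+2\bar{B}\bar{z}$---is a correct and arguably cleaner alternative to the paper's Proposition~\ref{prop:LinExtnFails2}, which instead normalizes coordinates and exhibits a linear CR function that does not even extend holomorphically. Your sketch of (a)$\Leftrightarrow$(d) follows the same normal-form reduction as Proposition~\ref{prop:equivofAandD} (where the paper delegates the final classification to Coffman); it is thin but defensible as a sketch.

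The genuine gap is in (a)$\Rightarrow$(b), the heart of the theorem, where your proposal stops precisely at the point where the work begins. You correctly isolate the two failure modes---imprimitivity of $\hat{Q}(z,\cdot)$ (fibers that are unions of parallel hyperplanes, as for $w=\sabs{z_1}^2+a\bar{z}_1^2$ or $w=\bar{z}_1^2$) and denominators in $z$ (as for $w=\bar{z}_1 z_2$, where $\bar{z}_1=Q/z_2$)---but you then assert, without argument, that $\rank\geq 2$ excludes both. Two steps are missing. First, the globalization step: fixed-$z$ constancy of $\hat{f}(z,\cdot)$ on the fibers of $\hat{Q}(z,\cdot)$ must be upgraded to a single identity $\hat{f}=\sum_k c_k(z)\hat{Q}^k$ with coefficients $c_k$ at least rational in $z$; this needs a parametrized Stein factorization or elimination argument that you do not supply. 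Second, and decisively, the claim that the $c_k$ have no poles (your ``separate check that the coefficients stay polynomial in $z$'') is exactly the hard content of the implication, and you offer no mechanism for it; even in the simplest case $B=0$, $\rank A=2$, one must prove coprimality-based divisibility statements to rule out denominators, and when $B\neq 0$ the structure is more delicate still. The paper needs three separate substantial arguments here: for $n=2$, $B\neq 0$, Weierstrass division in $\bar{z}_1$ plus a classification of polynomial solutions of the resulting ODEs (Lemma~\ref{lem:PolyExtnDE}, resting on Proposition~\ref{diffyqs:prop}); for $n=2$, $B=0$, an explicit rank computation showing that the space of homogeneous CR polynomials of each degree has exactly the dimension of the space of weighted homogeneous polynomials in $(z,w)$ (Lemma~\ref{lem:PolyExtnMa}); and for $n>2$, a slicing argument in which poles are excluded because two different slicings give extensions that agree off a codimension-two set (Lemma~\ref{lem:PolyExtnGeneraln}). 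Your one-sentence appeal to primitivity and rank does not substitute for any of these, so the implication (a)$\Rightarrow$(b)---and with it the theorem---remains unproved in your proposal.
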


\begin{remark} \label{remark:LFimages}
The four exceptional cases are precisely the CR singular quadrics that are
locally diffeomorphic images of $\R^2 \times \C^{n-1}$ via a CR map.
That is, write
$(z,w) = (z_1,z',w)$, and the quadric as
$w = Q(z_1,z',\bar{z}_1)$.  The map is
$(s,t,\xi) \mapsto \bigl(s+it,\xi,Q(s+it,\xi,s-it)\bigr)$.  Such submanifolds
were studied, including their nonextension property, in \cite{LMSSZ}.
The four cases are Levi-flat at CR points, but they are not the only Levi-flat
submanifolds.  Levi-flat CR singular submanifolds of codimension 2 were
classified in \cite{GongLebl}, and in particular the Levi-flat quadrics
for which extension holds are the precisely the
submanifolds equivalent to
$w = \bar{z}_1^2 + \cdots + \bar{z}_k^2$ for $2 \leq k \leq n$.
\end{remark}

A related consequence of Theorems~\ref{thm:mainext} and
\ref{thm:PolyExtn} is
the following.  Consider 
a real-analytic CR singular codimension-2 
submanifold
$M \subset \C^{n+1}$
whose CR structure extends through the
CR singular points, or equivalently $M$ is locally an immersion
of a generic (CR) manifold via a CR map.
Note that the
extended CR structure is the CR Nash
blowup, see Garrity~\cite{Garrity}.
Such $M$ do not have the extension property by \cite{LMSSZ}.
Let $M$ be given in the coordinates of Theorem~\ref{thm:mainext}.
Then
$\rank \left[ \begin{smallmatrix} A^* \\ B \end{smallmatrix} \right] \leq 1$, or in other words, the quadric model for $M$
is equivalent to either
one of those listed in part (d) of Theorem~\ref{thm:PolyExtn}
or $w=0$.  See Corollary~\ref{cor:CRimages}.

With regard to part (c) of Theorem~\ref{thm:PolyExtn},
consider a quadric $w=Q(z,\bar{z})$ with
$\bar{\partial} Q\not\equiv 0$, but where the rank condition fails.
Proposition~\ref{prop:LinExtnFails2} shows that there exists a vector $v$
such that $h = v \cdot \bar{z}$ is a CR function that is not a restriction
of a holomorphic function.  For example, when $n=2$ and
$\rank \left[ \begin{smallmatrix} A^* \\ B \end{smallmatrix} \right] = 1$, then the nullspace of $A^*$ and $B$ have a common
vector
$(v_1,v_2)$.
Then we may take
\begin{equation}
h(z,\bar{z}) = \bar{v}_2 \bar{z}_1 - \bar{v}_1 \bar{z}_2 .
\end{equation}

We remark that if $M$ is given by $w = Q+E$ and either 
$\bar{\partial} Q\equiv 0$ ($\rank \left[ \begin{smallmatrix} A^* \\ B \end{smallmatrix} \right] = 0$) or
$\rank \left[ \begin{smallmatrix} A^* \\ B \end{smallmatrix} \right] = 1$, then
the higher-order terms $E$ may determine whether an extension result holds or not.
When $\bar{\partial} Q\equiv 0$,
first consider the model $w=0$.
Then every
CR function is trivially the restriction of a holomorphic function.
On the other hand, if $M$ is given by $w = \snorm{z}^4$, then $f(z,\bar{z}) = \snorm{z}^2$ is a 
CR function that does not extend to a holomorphic function. See Example~\ref{ex:qzero}.
Now consider the case where
$\rank \left[ \begin{smallmatrix} A^* \\ B \end{smallmatrix} \right] = 1$.
Extension does not hold for the manifold $w=\bar{z}_1z_2$,
but it does hold for $w=\bar{z}_1z_2 + \bar{z}_2^3$.
See Example~\ref{ex:ehot}.

We use a similar general idea as the standard proof of Severi's
theorem, that is,
real-analytic CR functions on real-analytic CR submanifolds extend.
We complexify, then eliminate one (or more)
of the barred variables (depending on dimension), and then try to
eliminate the rest of the barred variables with the CR condition.
The issue here is that the CR
singular equation $w = \rho(z,\bar{z})$ together with
$\bar{w} = \bar{\rho}(\bar{z},z)$ in $\C^{n+1}$
only naturally eliminates one barred
variable, and then we are left with only $n-1$ CR vector fields to get rid of
$n$ other barred variables.  On these grounds it is clear that the extension
cannot hold in general.  The extension works only if we can in some manner
``solve for another barred variable'' in $w = \rho(z,\bar{z})$,
which cannot be done in general---and even if it can be done, we get only a
multivalued solution.

To illustrate the difficulty, consider 
the submanifold of $\C^3$ given by $w = z_1^2+z_2^2+\bar{z}_1^2+\bar{z}_2^2$.  We can
solve for $\bar{w} = \bar{z}_1^2+\bar{z}_2^2+ z_1^2+z_2^2$, but solving
for say $\bar{z}_1$, we find the multivalued 
$\pm \sqrt{w - z_1^2-z_2^2- \bar{z}_2^2}$.  One cannot just plug that in.
We can, however, use it to get rid of
all terms in $f$ where $\bar{z}_1$ is of any power higher than 1.  Then one
needs to use the CR vector field to get rid of not only $\bar{z}_2$
but also this first power of $\bar{z}_1$.

At the other end of the spectrum of difficulties is
the submanifold
$w = z_1\bar{z}_1 + z_2 \bar{z}_2$.  In this case, when we try to solve for
$\bar{z}_1$, we get negative powers of $z_1$.  In both cases, the question
seems to boil down to whether $\bar{z}_1$, or in general some linear
function of $\bar{z}_1$ and $\bar{z}_2$, is CR on $M$, and (as we see in the
theorem) that is in fact sufficient.

Let us outline the organization of this paper.
In \S\ref{sec:prelim}, we explain some basic notation and
preliminaries.
In \S\ref{sec:examples}, we provide some concrete
examples of the extension and nonextension phenomena.
In \S\ref{sec:quadrics}, we prove Theorem~\ref{thm:PolyExtn},
that is, the necessary  and sufficient conditions for
the extension phenomenon for polynomials on quadric models.
In \S\ref{sec:real-analytic}, we prove the main result, Theorem~\ref{thm:mainext}.
In \S\ref{sec:flattening}, we provide as an application a
connection of the extension result to flattening: on manifolds
where extension holds, flattening is equivalent to the existence of a
first integral for the singular foliation given by the CR orbits.
Finally, in \S\ref{sec:CRimages}, we use the results to
classify images of CR submanifolds up to the quadratic part.

We would like to thank and acknowledge Adam Coffman for useful
discussion about his
result~\cite{Coffman}, which proved invaluable in this work.
We would also like to thank the referee for useful suggestions.


\section{Preliminaries} \label{sec:prelim}

Any real-analytic 
submanifold $M \subset \C^{n+1}$ of real codimension 2 (real dimension $2n$)
with a CR singularity at the origin can
locally, after a rotation by a unitary,
be represented in coordinates $(z,w) \in \C^2 \times \C$ by
the equation
\begin{equation} \label{eq:defM}
w = \rho(z,\bar{z})
\end{equation}
for a real-analytic function $\rho$ that is $O(\snorm{z}^2)$.
Let $M_{\mathit{CR}} \subset M$ denote the CR points of $M$, that is, the points
near which
\begin{equation}
T_p^{0,1}M = \C \otimes T_pM \cap \operatorname{span}_{\C} \left
\{
\frac{\partial}{\partial\bar{z}_1}\Big|_{p},
\ldots ,
\frac{\partial}{\partial\bar{z}_n}\Big|_{p},
\frac{\partial}{\partial\bar{w}}\Big|_{p}
\right\}
\end{equation}
is of constant dimension as $p \in M$ varies.
Because $M$ is not a complex manifold,
 $T_p^{0,1}M$ is of complex dimension $n-1$ at CR points.
Therefore, among the \emph{CR vector fields}, that is, vector fields valued
in $T_p^{0,1}M$, $n-1$ vector fields suffice to form
a basis for $T_p^{0,1}M$ at all CR points.  A CR vector field will
generally vanish at the CR singular points although at CR singular
points the dimension of $T_p^{0,1}M$ is $n$.

We say a real-analytic function $f$ defined on $M$ is a
\emph{CR function} if $Lf = 0$ for any CR vector field $L$ on $M$.
This definition is equivalent to saying that $f$ is a CR function on $M_{\mathit{CR}}$.

Extrinsically, a CR vector field can be written as (given  $1 \leq k,\ell
\leq n$)
\begin{equation}
\rho_{\bar{z}_\ell} \frac{\partial}{\partial \bar{z}_k} - 
\rho_{\bar{z}_k} \frac{\partial}{\partial \bar{z}_\ell} +
\bigl(
  \rho_{\bar{z}_\ell} \bar{\rho}_{\bar{z}_k} -
  \rho_{\bar{z}_k} \bar{\rho}_{\bar{z}_\ell}
\bigr)
\frac{\partial}{\partial \bar{w}} ,
\end{equation}
as it needs to kill both
$-w+ \rho$ and $-\bar{w}+ \bar{\rho}$.

Since $M$ is written as a graph over $z$, then we can use $z$ for
parameters.  Therefore, we write any function $f$ on $M$
as a function of $z$ and $\bar{z}$.
When we write the vector field intrinsically using
these parameters on $M$, we find:

\begin{prop} \label{prop:formofCRvecs}
Let $M$ be given by \eqref{eq:defM}, and let $f(z,\bar{z})$ be a real-analytic
function.  If we consider $f$ as a function on $M$, then $f$ is CR
if and only if $L_{k,\ell}f = 0$ for the vector fields
\begin{equation}
L_{k,\ell} =
\rho_{\bar{z}_\ell} \frac{\partial}{\partial \bar{z}_k} - 
\rho_{\bar{z}_k} \frac{\partial}{\partial \bar{z}_\ell} 
\qquad
\text{for $1 \leq k,\ell \leq n$.}
\end{equation}
\end{prop}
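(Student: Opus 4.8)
The plan is to realize the intrinsic vector fields $L_{k,\ell}$ as the parameter-space preimages of the extrinsic CR vector fields already exhibited above, so that the two notions of being CR coincide by the chain rule. Parametrize $M$ near the origin by $\Phi \colon \C^n \to \C^{n+1}$, $\Phi(z) = \bigl(z, \rho(z,\bar z)\bigr)$, regarding $\C^n$ as a real $2n$-manifold with complexified tangent frame $\partial/\partial z_j, \partial/\partial \bar z_j$. First I would record the pushforwards
\[
\Phi_* \frac{\partial}{\partial \bar z_j}
= \frac{\partial}{\partial \bar z_j}
+ \rho_{\bar z_j} \frac{\partial}{\partial w}
+ \bar\rho_{\bar z_j} \frac{\partial}{\partial \bar w},
\]
together with the analogous formula for $\partial/\partial z_j$. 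Substituting into $L_{k,\ell}$, the coefficient of $\partial/\partial w$ becomes $\rho_{\bar z_\ell}\rho_{\bar z_k} - \rho_{\bar z_k}\rho_{\bar z_\ell} = 0$, so $\Phi_* L_{k,\ell}$ is of type $(0,1)$ and equals precisely the extrinsic field
\[
\rho_{\bar z_\ell} \frac{\partial}{\partial \bar z_k}
- \rho_{\bar z_k} \frac{\partial}{\partial \bar z_\ell}
+ \bigl( \rho_{\bar z_\ell}\bar\rho_{\bar z_k} - \rho_{\bar z_k}\bar\rho_{\bar z_\ell} \bigr)
\frac{\partial}{\partial \bar w}
\]
displayed just before the proposition. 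The vanishing of the $\partial/\partial w$ term is exactly the assertion that $\Phi_* L_{k,\ell}$ is tangent to $M$ and antiholomorphic; this cancellation is the computational heart of the argument and the one place where the antisymmetric form of $L_{k,\ell}$ is essential.

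Next I would match the intrinsic and extrinsic directional derivatives. Writing $\tilde f$ for the function on $M$ represented by $f$, so that $\tilde f \circ \Phi = f(z,\bar z)$, naturality of the pushforward gives $(\Phi_* L_{k,\ell})\tilde f = L_{k,\ell}(\tilde f \circ \Phi) = L_{k,\ell} f$. Because $\Phi_* L_{k,\ell}$ is tangent to $M$, the left-hand side is the genuine directional derivative of $\tilde f$ along the extrinsic CR field and does not depend on any ambient extension. Hence $L_{k,\ell}f = 0$ precisely when the extrinsic CR vector field $\Phi_* L_{k,\ell}$ annihilates $\tilde f$ on $M$.

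It then remains to see that the fields $\Phi_* L_{k,\ell}$ exhaust the CR vector fields. At a CR point the gradient $\gamma = (\rho_{\bar z_1},\dots,\rho_{\bar z_n})$ is nonzero, and the coefficient vectors of the $L_{k,\ell}$ all lie in the hyperplane $\{b : \sum_j b_j \rho_{\bar z_j} = 0\}$, which by the definition of $M_{\mathit{CR}}$ is exactly the $(n-1)$-dimensional parameter model of $T^{0,1}_pM$. Choosing an index $m$ with $\rho_{\bar z_m}\neq 0$, the $n-1$ fields $L_{m,j}$ for $j \neq m$ are linearly independent there and hence span that hyperplane, so every CR vector field is a real-analytic combination of the $\Phi_* L_{k,\ell}$ on $M_{\mathit{CR}}$. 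Consequently, if $L_{k,\ell}f = 0$ for all $k,\ell$ then $Lf = 0$ for every CR vector field $L$ on $M_{\mathit{CR}}$, while the converse is immediate since each $\Phi_* L_{k,\ell}$ is itself a CR vector field; invoking the already-noted equivalence between being CR on $M$ and on $M_{\mathit{CR}}$ closes both directions. I expect the only real friction to be the tangency-and-type bookkeeping in the pushforward step; the spanning claim is routine linear algebra once the gradient is known to be nonzero on $M_{\mathit{CR}}$.
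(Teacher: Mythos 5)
Your proof is correct and is essentially the paper's (implicit) argument: the paper states this proposition without a formal proof, as the intrinsic rewriting of the extrinsic CR vector fields $\rho_{\bar{z}_\ell}\partial_{\bar{z}_k} - \rho_{\bar{z}_k}\partial_{\bar{z}_\ell} + (\rho_{\bar{z}_\ell}\bar{\rho}_{\bar{z}_k} - \rho_{\bar{z}_k}\bar{\rho}_{\bar{z}_\ell})\partial_{\bar{w}}$ displayed just before it, which is exactly the correspondence you establish via the graph parametrization (your pushforward computation and the spanning argument at CR points fill in the details the paper leaves to the reader).
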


The set of CR singularities is precisely
where
$\rho_{\bar{z}_1} = \cdots = \rho_{\bar{z}_n} = 0$.  So $L_{k,\ell}$
all vanish precisely on the set of CR singularities of $M$, and where the
vector fields do not vanish, they span $T_p^{0,1} M$.

The question we are trying to answer is the following:
When is a function $f(z,\bar{z})$, as
a function on $M$, a restriction to $M$ of a holomorphic function $F(z,w)$?
A priori, to write the complexified equation for $f$ and $F$ to be equal on
$M$, we must complexify the equation $w = \rho(z,\bar{z})$ and its
conjugate.  That is, $f$ and $F$ are equal on $M$ near the origin
if there exist convergent power series $a$ and $b$ such that
\begin{equation}
F(z,w) = f(z,\bar{z}) +
a(z,\bar{z},w,\bar{w})
\bigl(
w - \rho(z,\bar{z})
\bigr)
+
b(z,\bar{z},w,\bar{w})
\bigl(
\bar{w} - \bar{\rho}(\bar{z},z)
\bigr) .
\end{equation}
This equation holds for all $z$ and $w$, and therefore we can treat
$\bar{z}$ and $\bar{w}$ as independent variables.  In other words,
we may replace $\bar{w}$ with
$\bar{\rho}(\bar{z},z)$ to remove the last term on the right.
Hence, we have the following proposition.
For later use we state the proposition
in somewhat greater generality,
where the functions are only required to be holomorphic
in $w$.

\begin{prop}\label{prop:Complexification}
Let $M$ be given by \eqref{eq:defM}, and let $\Phi(z,\bar{z},w)$
and $\Psi(z,\bar{z},w)$
be  real-analytic functions near the origin holomorphic in $w$.
Then $\Phi$ and $\Psi$ are equal on $M$ near the origin if and only if
\begin{equation}
\Phi(z,\bar{z},w) = \Psi(z,\bar{z},w) +
c(z,\bar{z},w)
\bigl(
w - \rho(z,\bar{z})
\bigr) 
\end{equation}
for some convergent power series $c(z,\bar{z},w)$.
In particular, when we complexify expressions that are holomorphic in $w$,
we only need to consider the
variables $\bar{z}$ variables as independent,
that is, we work in $\C^n \times \C^n \times \C$ using
independent variables $(z,\bar{z},w)$.
The complexified manifold is then the complex hypersurface
in $\C^{2n+1}$
given by $w = \rho(z,\bar{z})$.
\end{prop}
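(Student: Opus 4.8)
The plan is to reduce the statement to a single function and then to an elementary division problem after complexifying. The ``if'' direction is immediate: if $\Phi = \Psi + c\,(w - \rho)$, then restricting to $M$ — which for functions holomorphic in $w$ (and carrying no $\bar{w}$) means substituting $w = \rho(z,\bar{z})$ — kills the last term, so $\Phi = \Psi$ on $M$. For the ``only if'' direction I would set $G = \Phi - \Psi$, a real-analytic function holomorphic in $w$ that vanishes on $M$, and produce a convergent power series $c$ with $G = c\,(w - \rho)$.

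First I would complexify. Being real-analytic in $(z,\bar{z})$ and holomorphic in $w$, the germ $G$ defines a holomorphic function $G(z,\zeta,w)$ on a neighborhood of the origin in $\C^n_z \times \C^n_\zeta \times \C_w$, where $\zeta$ is the independent variable replacing $\bar{z}$; likewise $\rho$ extends to $\rho(z,\zeta)$. The hypothesis that $G$ vanishes on $M$ reads $G\bigl(z,\bar{z},\rho(z,\bar{z})\bigr) = 0$, a real-analytic identity in $z$ on a real neighborhood of the origin. By the identity theorem (uniqueness of real-analytic, hence holomorphic, continuation), the complexified identity
\[
G\bigl(z,\zeta,\rho(z,\zeta)\bigr) = 0
\]
holds for all $(z,\zeta)$ near the origin in $\C^n \times \C^n$; that is, $G$ vanishes on the complex hypersurface $w = \rho(z,\zeta)$ in $\C^{2n+1}$.

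Next I would carry out the division. Introduce the holomorphic change of coordinates $(z,\zeta,w) \mapsto (z,\zeta,u)$ with $u = w - \rho(z,\zeta)$, which is biholomorphic near the origin since $w - \rho$ is monic of degree one in $w$, and which carries the hypersurface to $\{u = 0\}$. Writing $\tilde{G}(z,\zeta,u) = G\bigl(z,\zeta,u+\rho(z,\zeta)\bigr)$, the vanishing above becomes $\tilde{G}(z,\zeta,0) \equiv 0$, so the holomorphic Taylor remainder (Hadamard's lemma),
\[
\tilde{G}(z,\zeta,u) = u \int_0^1 \tilde{G}_u(z,\zeta,tu)\,dt =: u\,\tilde{c}(z,\zeta,u),
\]
exhibits a holomorphic factor $\tilde{c}$. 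Returning to the original coordinates gives $G(z,\zeta,w) = \bigl(w-\rho(z,\zeta)\bigr)\,c(z,\zeta,w)$ with $c(z,\zeta,w) = \tilde{c}\bigl(z,\zeta,w-\rho(z,\zeta)\bigr)$ holomorphic (and holomorphic in $w$); restricting $\zeta = \bar{z}$ recovers the asserted identity with $c(z,\bar{z},w)$ a convergent power series. The ``In particular'' sentence is then just a restatement: since $\Phi$ and $\Psi$ carry no $\bar{w}$, only $\bar{z}$ must be treated as an independent variable and the complexified model is exactly $w = \rho(z,\bar{z})$.

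As for difficulty, I expect no Weierstrass preparation to be needed, precisely because $w - \rho$ is already a coordinate (linear and monic in $w$), reducing the division to the one-line estimate above. The only step requiring genuine care is the complexification: one must justify passing from the identity $G(z,\bar{z},\rho) = 0$, valid a priori only at the real points, to a bona fide holomorphic identity on the complexified hypersurface. This is standard — it is the real-analytic identity theorem — but it is the conceptual crux of the proposition and the precise content being asserted in the final ``In particular'' claim.
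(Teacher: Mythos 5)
Your proof is correct, but it is organized differently from the paper's. The paper does not argue from scratch: it starts from the standard complexification fact that two real-analytic functions (possibly involving $\bar{w}$) agree on $M$ if and only if their difference lies in the ideal generated by \emph{both} defining functions, i.e.\ equals $a\cdot(w-\rho)+b\cdot(\bar{w}-\bar{\rho})$ with $(z,\bar z,w,\bar w)$ treated as independent variables, and then, since $\Phi$ and $\Psi$ carry no $\bar{w}$, substitutes $\bar{w}=\bar{\rho}(\bar z,z)$ to kill the $b$-term; the one-generator statement falls out by this substitution trick. You instead never introduce $\bar{w}$ at all: you complexify the single vanishing identity $G\bigl(z,\bar z,\rho(z,\bar z)\bigr)=0$ directly, via the identity theorem on the maximally totally real set $\{\zeta=\bar z\}$, to vanishing on the complex hypersurface $w=\rho(z,\zeta)$, and then you \emph{prove} the division by the coordinate change $u=w-\rho$ together with Hadamard's lemma, rather than quoting an ideal-membership characterization. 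What your route buys is self-containedness: the two steps the paper leaves implicit (why the variables may be treated as independent, and why vanishing on the complexified hypersurface yields a holomorphic factor of $w-\rho$) are exactly the ones you make explicit, and your observation that no Weierstrass preparation is needed because $w-\rho$ is monic linear in $w$ is the right reason the division is trivial here. What the paper's route buys is brevity and narrative fit: the two-generator form arises naturally from the question of when $f(z,\bar z)=F(z,w)$ on $M$, and the proposition is then a one-line reduction of it. Both arguments establish the same statement, including the ``in particular'' clause, which in your version is visibly the content of the identity-theorem step.
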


We can write the condition that $f(z,\bar{z})$ is equal to $F(z,w)$
on $M$
as either 
\begin{equation}
F(z,w) = f(z,\bar{z}) +
c(z,\bar{z},w,\bar{w})
\bigl(
w - \rho(z,\bar{z})
\bigr)
\qquad 
\text{or}
\qquad
F\bigl(z,\rho(z,\bar{z})\bigr) = f(z,\bar{z}).
\end{equation}
One advantage of the division form of the condition is that it is easier to
interpret with formal power series $F$.  With the second form, we have to first
note that $\rho$ has no constant terms and 
$F\bigl(z,\rho(z,\bar{z})\bigr)$ can be correctly interpreted formally.

A final note is that it is possible to absorb holomorphic terms from
$\rho$ into $w$ via a biholomorphic change of variables.  In particular,
the equation for a manifold given by
\begin{equation}
w = \rho(z,\bar{z}) =
Q(z,\bar{z}) + E(z,\bar{z}) =
z^*Az + \overline{z^tBz} + z^tCz + E(z, \bar{z}),
\end{equation}
can be changed by a quadratic change of coordinates such that either $C=0$
or perhaps $C=B$, and at times this normalization is useful and commonly
made.  However, in this work the form $z^tCz$ does not appear in any
essential way, and for the purposes of examples, $C$ can generally be
ignored.  This means that the only normalization we make
is a rotation via a unitary map in $(z,w)$,
and then perhaps a complex linear map in the $z$ variables.


\section{Examples} \label{sec:examples}

Our first two examples show how extension of CR functions can fail.

\begin{example}\label{ex:qzero}
Suppose
that $Q \equiv 0$.  The model $w = Q(z,\bar{z}) = 0$ 
is a complex hypersurface, and in fact not even CR singular.  Any CR
function on $\{ w=0 \}$ is a holomorphic function in $z$ and so clearly
extends, although not uniquely.  However, consider the submanifold $M$ in $\C^{n+1}$ given by
\begin{equation}
w = {( \sabs{z_1}^2 + \sabs{z_2}^2 + \cdots + \sabs{z_n}^2)}^2 = \snorm{z}^4 .
\end{equation}
This $M$ is CR singular, with the origin being an isolated CR singularity.
The function $f(z,\bar{z}) = \snorm{z}^2$
is
CR because, on $M \setminus \{ 0 \}$,
the function is equal to one of the branches of $\sqrt{w}$,
which is holomorphic.
Hence, $f$ is CR outside the origin as needed.
Because $M$ is a
generic manifold outside the origin, the holomorphic extension is
unique near every such point.  That means any possible extension at the
origin would have to equal $\sqrt{w}$ on an open set, which is preposterous.
\end{example} 

\begin{example}\label{ex:LinExtnFails}
Consider the quadric submanifold in $\C^3$ given by
\begin{equation}
w = Q(z,\bar{z})=\bar{z}_1 z_2.
\end{equation}
The CR vector field is given by
\begin{equation}
L  = 
Q_{\bar{z}_2} \frac{\partial}{\partial \bar{z}_1}
-
Q_{\bar{z}_1} \frac{\partial}{\partial \bar{z}_2}=-z_2 \frac{\partial}{\partial \bar{z}_2}.
\end{equation}
Thus, $f(z,\bar{z})=\bar{z}_1$
is a CR function on the quadric.
However, $f$ cannot be equal to any holomorphic function in a
neighborhood of the origin.  If it were, the unique holomorphic
extension would equal $w/z_2$ on an open set, a contradiction.
\end{example}

It is entirely possible that extension fails on a quadric model for $M$,
but the extension result holds on $M$ due to higher-order terms.

\begin{example}\label{ex:ehot}
The extension result does not hold for the submanifold in $\C^3$ given by
\begin{equation}
w = \bar{z}_1 z_2 .
\end{equation}
We have seen above that $f = \bar{z}_1$ is an example of a CR function that
does not extend.

However, consider $M$ given by
\begin{equation}
w = \rho(z,\bar{z})=\bar{z}_1 z_2 + \bar{z}_2^3 .
\end{equation}
We claim that any real-analytic function that is CR on $M_{\mathit{CR}}$ extends to
a holomorphic function near the origin. 

Let $f(z,\bar{z})$ be a real-analytic CR function written in terms of the
$z,\bar{z}$ variables.
As $f$ is CR, we have $Lf = 0$, where $L$ is the CR vector field:
\begin{equation}
L =
\rho_{\bar{z}_2} \frac{\partial}{\partial \bar{z}_1}
-
\rho_{\bar{z}_1} \frac{\partial}{\partial \bar{z}_2}=
3\bar{z}_2^2 \frac{\partial}{\partial \bar{z}_1}
-
z_2 \frac{\partial}{\partial \bar{z}_2} .
\end{equation}

Since $f$ is a function of $z$ and $\bar{z}$, it does not
depend on $\bar{w}$; so, when we complexify we work in the variables
$z_1,z_2,\bar{z}_1,\bar{z}_2,w$ and we treat the barred variables
as independent.
See Proposition~\ref{prop:Complexification}.
We consider $f$ to be a function on the
complex submanifold $\widetilde{M}$ in $\C^5$ given by
$w = \rho(z,\bar{z})$ in this way.  The complexified
vector field $L$ is tangent to this manifold.

We apply the Weierstrass division algorithm to $f$ using the variable $\bar{z}_2$ and write
\begin{equation}
f(z,\bar{z}) = h(z,\bar{z},w) \bigl(\bar{z}_1z_2+\bar{z}_2^3-w\bigr)
+ a(z,\bar{z}_1,w) +
b(z,\bar{z}_1,w) \bar{z}_2 +  c(z,\bar{z}_1,w)\bar{z}_2^2.
\end{equation}
Then $f$ equals a holomorphic function restricted to $M$ if 
$a_{\bar{z}_1}\equiv 0$, $b \equiv 0$, and $c \equiv  0$.

Since $L$ kills both $f$ and $\bar{z}_1z_2+\bar{z}_2^3-w$, on $\widetilde{M}$ we have
\begin{equation}
L\bigl(
a(z,\bar{z}_1,w) +
b(z,\bar{z}_1,w) \bar{z}_2+c(z,\bar{z}_1,w) \bar{z}_2^2 \bigr) = 0 .
\end{equation}
Thus, on $\widetilde{M}$ we have
\begin{equation}
\begin{split}
0 & = L\bigl(
a +
b \bar{z}_2  +c \bar{z}_2^2 \bigr)
\\
& =
-b z_2 -
2c z_2 \bar{z}_2 + 3 a_{\bar{z}_1} \bar{z}_2^2 +3b_{\bar{z}_1}\bar{z}_2^3+3c_{\bar{z}_1}\bar{z}_2^4 .
\end{split}
\end{equation}
We substitute for $\bar{z}_1^3$ using the defining equation for
$\widetilde{M}$ and find
\begin{multline}
-b z_2 -
2c z_2 \bar{z}_2 + 3 a_{\bar{z}_1} \bar{z}_2^2 +3b_{\bar{z}_1}(w-\bar{z}_1z_2)+
3c_{\bar{z}_1}(w-\bar{z}_1z_2)\bar{z}_2\\
=
\bigl(-b z_2 +3b_{\bar{z}_1}(w-\bar{z}_1z_2)\bigr)
+\bigl(-2c z_2+3c_{\bar{z}_1}(w-\bar{z}_1z_2) \bigr)\bar{z}_2 + 3 a_{\bar{z}_1} \bar{z}_2^2.
\end{multline}
This expression is zero on $\widetilde{M}$, so it is divisible by
$\bar{z}_1z_2+\bar{z}_2^3-w$, which is a Weierstrass polynomial in $\bar{z}_2$ of degree 3.
Hence, the expression is identically zero.
We get three equations:
\begin{align*}
& b z_2 =3b_{\bar{z}_1}(w-\bar{z}_1z_2) , \\
& 2c z_2=3c_{\bar{z}_1}(w-\bar{z}_1z_2) , \\
& 3 a_{\bar{z}_1}=0.
\end{align*}
Thinking of $b$ as the dependent variable and $\bar{z}_1$ as the independent variable,
we solve the first differential equation explicitly
to get $b= \alpha (w-\bar{z}_1z_2)^{-1/3}$ on an open dense set of $z_1, z_2, w$
for some $\alpha$ independent of $\bar{z}_1$.
Thus, there is no nonzero real-analytic solution $b$ of the first equation.
Similarly, the solution of the second equation is  $c= \alpha (w-\bar{z}_1z_2)^{-2/3}$
on an open dense set of $z_1, z_2, w$.
Thus, there is no nonzero real-analytic solution $c$ of the second equation.
Using these three equations, 
we conclude that $a_{\bar{z}_1}\equiv 0$, $b \equiv 0$, and $c \equiv  0$, as desired.
Thus, $f$ has a holomorphic extension.
\end{example}


\section{Extending CR polynomials on quadrics}\label{sec:quadrics}


In this section we prove the extension result for CR polynomials on quadrics.
That is, we prove the equivalence of the conditions
(a), (b), (c), and (d) in Theorem~\ref{thm:PolyExtn}.
The following proposition proves that (c) implies (a) by contrapositive.
Note that if
$\bar{\partial}Q \not\equiv 0$, then at least one of $A$ or $B$
is not zero.
So if (a) is not true, then
$\rank \left[ \begin{smallmatrix} A^* \\ B \end{smallmatrix} \right] = 1$.

\begin{prop}\label{prop:LinExtnFails2}
Suppose $M \subset \C^{n+1}$, $n \geq 2$, is a quadric given
in coordinates $(z,w) \in \C^n \times \C$
by
\begin{equation}
w = Q(z,\bar{z}) =
z^* A z + \overline{z^t B z} + z^t C z ,
\end{equation}
where
$A,B,C$ are complex $n \times n$ matrices with $B$ and $C$ symmetric.
Suppose that
\begin{equation}
   \rank \begin{bmatrix} A^* \\ B \end{bmatrix} = 1 .
\end{equation}
Then there exists a vector
$v \in \C^n$ such that the
linear function $f(z,\bar{z}) = v \cdot \bar{z}$,
when considered as a function on $M$, is a CR function,
but on no neighborhood of the origin does there exist a holomorphic
function $F$ whose restriction to $M$ is $f$. 
\end{prop}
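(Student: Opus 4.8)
The plan is to read an explicit vector $v$ directly off the rank-$1$ hypothesis, verify that $f=v\cdot\bar z$ is CR by a one-line computation with the fields $L_{k,\ell}$, and then obtain nonextension from a parity obstruction after complexifying. First I would unpack the hypothesis. Since $\rank\left[\begin{smallmatrix}A^*\\B\end{smallmatrix}\right]=1$, all columns of this $2n\times n$ matrix are complex multiples of one nonzero vector, so there are $p,q,\mu\in\C^n$ with $\mu\neq0$ such that $A^*=p\mu^t$ and $B=q\mu^t$. Taking conjugate transposes gives $A=\bar\mu\,\bar p^{\,t}$, so $Az=(\bar p\cdot z)\bar\mu$ lies in $\operatorname{span}(\bar\mu)$ for every $z$; and the symmetry of $B$ forces $q$ to be a multiple of $\mu$, say $q=c\mu$, so that $B=c\mu\mu^t$. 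I would then set $v:=\bar\mu$ (nonzero) and take $f(z,\bar z)=v\cdot\bar z=\bar\mu\cdot\bar z$.

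To see that $f$ is CR, recall that $Q_{\bar z_k}=(Az)_k+2(\bar B\bar z)_k$. The computations above give $Az=(\bar p\cdot z)\bar\mu$ and $\bar B\bar z=\bar c(\bar\mu\cdot\bar z)\bar\mu$, so each partial $Q_{\bar z_k}$ equals the common scalar $\big[(\bar p\cdot z)+2\bar c(\bar\mu\cdot\bar z)\big]$ times $\bar\mu_k=v_k$. Hence for all $k,\ell$ we get $L_{k,\ell}f=Q_{\bar z_\ell}v_k-Q_{\bar z_k}v_\ell=0$, and $f$ is a CR function on $M_{\mathit{CR}}$ by Proposition \ref{prop:formofCRvecs}.

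For nonextension, suppose toward a contradiction that some holomorphic $F$ satisfies $F(z,Q(z,\bar z))=f$ near the origin. By Proposition \ref{prop:Complexification} I may treat $\bar z$ as an independent variable, and the forms of $A$ and $B$ organize $Q$ as $Q=(\bar p\cdot z)\,f+\bar c\,f^2+z^tCz$, a polynomial in $z$ and the single linear form $f$; in particular $Q(0,\bar z)=\bar c\,f^2$. Setting $z=0$ and writing $t:=v\cdot\bar z$, which ranges over a full neighborhood of $0$ in $\C$ since $v\neq0$, and noting that at $z=0$ both sides depend on $\bar z$ only through $t$, the identity becomes $F(0,\bar c\,t^2)=t$ for all small $t$. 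Replacing $\bar z$ by $-\bar z$ leaves $\bar c\,t^2$ unchanged while sending $t\mapsto-t$, giving $F(0,\bar c\,t^2)=-t$ and hence $t=-t$ for all small $t$, a contradiction.

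The main obstacle is the first step: one must use \emph{both} the rank-$1$ hypothesis and the symmetry of $B$ to guarantee that $Q$ depends on $\bar z$ only through the one linear form $f=v\cdot\bar z$, equivalently that $\nabla_{\bar z}Q$ is everywhere parallel to the constant vector $v$. Without symmetry the barred part of $z^*Az+\overline{z^tBz}$ need not collapse onto a single linear form, and the clean quadratic $Q=\bar c f^2+(\bar p\cdot z)f+z^tCz$ would fail. Once $Q$ is written as a polynomial in $z$ and this single form, both the CR property and the parity contradiction at $z=0$ follow immediately.
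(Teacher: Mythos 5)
Your proof is correct. It follows the same skeleton as the paper's---extract the distinguished antiholomorphic direction from the rank-one hypothesis, check $L_{k,\ell}f=0$, then derive nonextension after complexifying---but both main steps are executed differently, and the comparison is worth recording. For the setup, the paper chooses an invertible linear map $T$ so that $A^*$ and $B$ have nonzero entries only in their first columns; in those coordinates your $v$ becomes $e_1$, the function becomes $\bar z_1$, and the fields $\partial/\partial\bar z_k$, $k\geq 2$, are visibly CR and span $T^{0,1}M$ at CR points. You instead stay in the original coordinates and extract the invariant factorization $A^*=p\mu^t$, $B=c\mu\mu^t$ (correctly using the symmetry of $B$ to collapse $q$ onto $\mu$), then verify $L_{k,\ell}f=0$ by hand from the proportionality $Q_{\bar z_k}=\lambda v_k$; this is exactly the structure the paper's normalization diagonalizes, so the two setups are equivalent. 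The genuine divergence is in the nonextension step. The paper writes $F(z,w)=\bar z_1+a(z,\bar z,w)\bigl(w-Q(z,\bar z)\bigr)$ with $(z,\bar z,w)$ independent and observes that every monomial of $a\cdot(w-Q)$ either contains $w$ or has degree at least two in $(z,\bar z)$, so the coefficient of $\bar z_1$ can never match: a one-line coefficient count, which moreover rules out even a \emph{formal} extension. You instead restrict the complexified identity to the plane $\{z=0\}$, where $Q$ collapses to $\bar c\,t^2$ with $t=v\cdot\bar z$, and obtain the parity contradiction $t=F(0,\bar c\,t^2)=-t$. This is equally valid, including the degenerate cases: if $B=0$ (so $\bar c=0$) your identity reads $F(0,0)=t$, which is already absurd, and if $A=0$ then $\bar c\neq 0$ and the parity argument applies. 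Your version makes concrete the intuition stated in the paper's introduction that any extension would have to invert a square, at the modest cost of needing $F$ to be an actual (not merely formal) function on a neighborhood; the paper's version is shorter and formal-power-series robust.
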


\begin{proof} 
First let us make a linear change of coordinates in $z$.  If the change of
coordinates matrix is $T$, then the matrix $A$
transforms as $T^* A T$, so $A^*$ transforms as 
$T^* A^* T$, and $B$ transforms as $T^t B T$.  Using the rank condition,
we choose an invertible $T$ such that $A^*T$ and $BT$ have zero entries in
all but the first column, and then so do the matrices $T^*A^*T$ and $T^tBT$.
Thus, without loss of generality, let us assume that both $A^*$ and $B$ have zero
entries in all but the first column.  As $B$ is symmetric, it is only the
top left entry in $B$ that may be nonzero.  Furthermore, only the first row of $A$ may be
nonzero.

Given the form of $A$ and $B$ and the
fact that
$\rank \left[ \begin{smallmatrix} A^* \\ B
\end{smallmatrix} \right] = 1$, we find that
$Q_{\bar{z}_1} \not= 0$, and
$Q_{\bar{z}_k} \equiv 0$ for $k=2,\ldots,n$.
It is not difficult to see that the vector fields
\begin{equation}
L_k = \frac{\partial}{\partial \bar{z}_k}  \qquad k=2,\ldots,n
\end{equation}
are CR vector fields (intrinsically, using $z$ as a parameter)
that span $T^{0,1} M$ at CR points.

Therefore, the function $f = \bar{z}_1$ is a CR function.
However, $f$ cannot be equal to any holomorphic function $F$ in a
neighborhood of the origin:  If it were,
we would have (see Proposition~\ref{prop:Complexification})
\begin{equation}
F(z,w) =
\bar{z}_1
+ a(z,\bar{z},w)\bigl(w - Q(z,\bar{z})\bigr)
\end{equation}
for some convergent power series $a$; but that is impossible since
$a(z,\bar{z},w)\bigl(w - Q(z,\bar{z})\bigr)$ has no linear terms in
$\bar{z}$.
\end{proof}

Next, let us prove that (a) is equivalent with (d).

\begin{prop} \label{prop:equivofAandD}
Suppose $M \subset \C^{n+1}$, $n \geq 2$, is a quadric given
in coordinates $(z,w) \in \C^n \times \C$
by
\begin{equation}
w = Q(z,\bar{z}) =
z^* A z + \overline{z^t B z} + z^t C z ,
\end{equation}
where
$A,B,C$ are complex $n \times n$ matrices with $B$ and $C$ symmetric.
Then
\begin{equation}
   \rank \begin{bmatrix} A^* \\ B \end{bmatrix} = 1
\end{equation}
if and only if $M$ is biholomorphically equivalent to one of the following
quadrics:
   \begin{enumerate}[(1)]
     \item $w = \bar{z}_1 z_2 + \bar{z}_1^2$,
     \item $w = \bar{z}_1 z_2$,
     \item $w = \sabs{z_1}^2 + a \bar{z}_1^2$, $a \geq 0$,
     \item $w = \bar{z}_1^2$.
   \end{enumerate}
\end{prop}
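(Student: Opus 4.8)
The plan is to prove the biconditional by tracking the rank of $\left[\begin{smallmatrix}A^*\\B\end{smallmatrix}\right]$ under the coordinate changes that realize biholomorphic equivalence of quadrics, and then to normalize a rank-$1$ quadric explicitly.

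I would begin by recording how $A$ and $B$ transform. A biholomorphism of $(\C^{n+1},0)$ carrying one CR singular quadric onto another respects the weighted structure in which $z$ has weight $1$ and $w$ weight $2$; passing to its weighted-linear leading part, it becomes a composition of an invertible linear map $z \mapsto Tz$, a scaling $w \mapsto \nu w$ with $\nu \neq 0$, and an absorption $w \mapsto w - z^t C' z$ of a holomorphic quadratic term. Under $z \mapsto Tz$ one has $A \mapsto T^*AT$ and $B \mapsto T^tBT$, so $\left[\begin{smallmatrix}A^*\\B\end{smallmatrix}\right] \mapsto \left[\begin{smallmatrix}T^* & 0\\ 0 & T^t\end{smallmatrix}\right]\left[\begin{smallmatrix}A^*\\B\end{smallmatrix}\right]T$; under $w \mapsto \nu w$ the matrix is multiplied by the nonzero scalar $\bar\nu$; and the absorption leaves $A,B$ unchanged. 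Hence the rank is an invariant of the equivalence class. Since each of the four listed models has $\left[\begin{smallmatrix}A^*\\B\end{smallmatrix}\right]$ of rank exactly $1$ by inspection, any quadric equivalent to one of them has rank $1$; this is the ``if'' direction.

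For the converse, assume the rank equals $1$. After absorbing $C$ so that $Q = z^*Az + \overline{z^tBz}$, I would apply the linear normalization used in the proof of Proposition~\ref{prop:LinExtnFails2}: choosing $T$ whose last $n-1$ columns span the common kernel of $A^*$ and $B$ makes both $A^*$ and $B$ supported in their first column, and symmetry of $B$ then leaves at most the $(1,1)$-entry of $B$ and the first row of $A$ nonzero. Thus
\[
Q = \bar z_1\,\ell(z) + \beta\,\bar z_1^2, \qquad \ell(z) = a_1 z_1 + \cdots + a_n z_n,
\]
with $(a_1,\dots,a_n,\beta) \neq 0$. Now I would split into cases. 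If $\ell$ depends on some $z_j$ with $j \geq 2$, a linear change among $z_2,\dots,z_n$ reduces that part to $z_2$, and grouping $a_1\sabs{z_1}^2 + \bar z_1 z_2 = \bar z_1(z_2 + a_1 z_1)$ (renaming $z_2 + a_1 z_1$ as $z_2$) yields $Q = \bar z_1 z_2 + \beta\bar z_1^2$: this is model (2) if $\beta = 0$, and scaling $z_2$ and $w$ by $\beta$ gives model (1) if $\beta \neq 0$. If instead $\ell = a_1 z_1$, then $Q = a_1\sabs{z_1}^2 + \beta\bar z_1^2$; a dilation $z_1 \mapsto \lambda z_1$ combined with a scaling of $w$ normalizes the coefficient of $\sabs{z_1}^2$ to $1$ and, by choosing $\arg\lambda$, the coefficient of $\bar z_1^2$ to a real $a \geq 0$, giving model (3) when $a_1 \neq 0$; when $a_1 = 0$ necessarily $\beta \neq 0$ and a scaling gives model (4).

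The case-by-case scalings are routine. The step I expect to require the most care is the invariance claim behind the ``if'' direction: that a biholomorphic equivalence between two quadrics may be replaced by its weighted-linear leading part without changing the source or target model. I would justify this by conjugating the equivalence with the real dilations $\delta_t(z,w) = (tz,t^2 w)$, which preserve every quadric, and letting $t \to 0$ to extract a weighted-homogeneous map still carrying one quadric to the other; its components are necessarily linear in $z$ and of the form $\nu w + z^t C' z$ in $w$, which are exactly the transformations whose effect on $A$ and $B$ was computed above.
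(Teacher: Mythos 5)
Your proof is correct and takes essentially the same approach as the paper's: the identical linear normalization (absorb $C$ into $w$, use the common kernel of $A^*$ and $B$ to reduce $A$ to its first row and $B$ to its top-left entry, then act on $z_2,\ldots,z_n$ alone), after which the paper merely states that the normal form ``can be now computed directly'' or cites Coffman's classification in $\C^3$---your two-case analysis with scalings is precisely that omitted computation. Your dilation argument for rank invariance fills in what the paper dismisses as ``immediate''; it is sound, though for the limit of $\delta_{1/t}\circ F\circ\delta_t$ to exist as $t\to 0$ one should note that the $z$-linear part of the $w$-component of $F$ vanishes, which follows because $dF(0)$ preserves the tangent space $T_0M=\C^n\times\{0\}$.
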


\begin{proof}
One direction is immediate.
For the other direction, assume the rank condition.
First, we dispose of $C$ by folding holomorphic terms into $w$.
Using the same argument as in the previous proposition, we first normalize
$A$ to have all but the first row zero and $B$ to have all but the top
left entry zero.  Now we use a transformation that touches only
$z_2$ through $z_n$, that is, we use a matrix of the form $T = [1] \oplus T'$,
which leaves $A$ and $B$ in the form described above. Using such a matrix, we can ensure that the
first row of $A$ has zeros in all but the first two entries.
In other words, we have reduced the problem a problem in $\C^3$,
and the normal form can be now computed directly.  Or alternatively,
we refer to the work of Coffman~\cite{Coffman},
who classified the quadrics in $\C^3$.
\end{proof}

Next we show that, in the setup of Theorem~\ref{thm:PolyExtn},
(a) implies (b). This implication is proved using separate lemmas to cover the two cases $B\neq 0$ and $B=0$. For the proof of the first lemma  we need the following elementary result.

\begin{prop} \label{diffyqs:prop}
Assume that $p$, $q$, $r$, $s$, and $t$ are constants.
\begin{enumerate}[(a)]
\item
Consider $(p+q \eta)\zeta = (r+s \eta)\frac{d\zeta}{d\eta}$, $s \not= 0$.
The equation has a nonconstant polynomial solution
if and only if $q=0$ and $\frac{p}{s}$ is a positive integer.
The equation has a nonzero constant solution if and only if
$p=q=0$.
\item
Consider $(p+q \eta)\zeta = (r+s \eta+t \eta^2)\frac{d\zeta}{d\eta}$, $t \not=0$.  Let $\xi_1$ and
$\xi_2$ be the two roots of $r+s \eta+t \eta^2$, and suppose $\xi_1
\not=\xi_2$.  Then
the equation has a nonconstant polynomial solution
if and only if
$\frac{q\xi_1+p}{t(\xi_1-\xi_2)}$ and
$\frac{q\xi_2+p}{t(\xi_2-\xi_1)}$ are both nonnegative integers,
and one of them is positive.
The equation has a nonzero constant solution if and only if
$q \xi_1 + p = q \xi_2 + p =0$.
\item
The equation $(p+q \eta)\zeta = t {(\eta-\xi)}^2\frac{d\zeta}{d\eta}$ for $t\not=0$,
has a nonconstant polynomial solution
if and only if $\frac{q}{t}$ is a positive
integer and $q\xi+p=0$.
The equation has a nonzero constant solution if and only if
$q=p=0$.
\end{enumerate}
\end{prop}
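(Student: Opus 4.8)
The plan is to treat all three parts uniformly: each is a first-order linear homogeneous ODE of the shape $A(\eta)\,\frac{d\zeta}{d\eta} = (p+q\eta)\zeta$, where $A$ is the linear or quadratic coefficient appearing on the right-hand side. I would solve each by separation of variables, writing the general nonzero solution as
\begin{equation}
\zeta(\eta) = C \exp\left( \int \frac{p+q\eta}{A(\eta)}\,d\eta \right),
\end{equation}
expanding the integrand in partial fractions, integrating, and exponentiating. The key observation is that the exponents appearing in the closed form are \emph{exactly} the quantities named in the statement, so the conditions for $\zeta$ to be a polynomial can simply be read off.

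For (a) I would use $\frac{p+q\eta}{r+s\eta} = \frac{q}{s} + \frac{ps-qr}{s(r+s\eta)}$, which integrates to give $\zeta = C\,e^{(q/s)\eta}(r+s\eta)^{(ps-qr)/s^2}$; this is a polynomial exactly when the exponential disappears, that is $q=0$, after which the remaining exponent equals $p/s$, which must be a nonnegative integer (positive in the nonconstant case and zero in the nonzero-constant case). For (b) I would factor $A = t(\eta-\xi_1)(\eta-\xi_2)$; partial fractions then present $\zeta$ as a constant times $(\eta-\xi_1)^{m_1}(\eta-\xi_2)^{m_2}$, where the exponents are precisely $m_1 = \frac{q\xi_1+p}{t(\xi_1-\xi_2)}$ and $m_2 = \frac{q\xi_2+p}{t(\xi_2-\xi_1)}$, the quantities named in the statement. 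Since $\xi_1\neq\xi_2$, such a product is a polynomial iff both exponents are nonnegative integers, nonconstant iff one is positive, and a nonzero constant iff both vanish, i.e. $q\xi_i+p=0$. For (c) the double root gives $\frac{p+q\eta}{t(\eta-\xi)^2} = \frac{1}{t}\big(\frac{q}{\eta-\xi} + \frac{p+q\xi}{(\eta-\xi)^2}\big)$, hence $\zeta = C(\eta-\xi)^{q/t}\exp\!\big(-\tfrac{p+q\xi}{t(\eta-\xi)}\big)$; the essential singularity at $\xi$ forces $p+q\xi=0$, and then $\zeta = C(\eta-\xi)^{q/t}$ is a polynomial iff $q/t$ is a nonnegative integer. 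In every converse direction I would simply exhibit the explicit monomial or product as the required solution and verify it by one differentiation.

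The main obstacle is rigor rather than computation: separation of variables divides by $\zeta$ and integrates a rational function producing logarithms or fractional/negative powers, so I must justify that \emph{every} polynomial solution is of the closed form above, not merely the ones produced by formal integration. I would supply this via uniqueness for first-order linear ODEs: on any simply connected open set avoiding the finitely many roots of $A$, the holomorphic solutions form a one-dimensional space spanned by the explicit exponential solution, so a given polynomial solution must be a scalar multiple of it there, and by analytic continuation this identity persists, forcing the polynomial to coincide with the closed form everywhere. (Alternatively, a purely algebraic comparison of degrees and leading coefficients shows that $q\neq 0$ forces the degree to equal $q/t$ and pins down the admissible exponents, but the ODE-uniqueness argument is cleaner and covers all three parts at once.) Once this is in place, reading off the exponents completes each equivalence, with the nonconstant-polynomial and nonzero-constant cases distinguished exactly by whether the relevant exponents are positive or zero.
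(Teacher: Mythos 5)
Your proposal is correct and takes essentially the same approach as the paper: the paper's proof consists precisely of writing down the explicit solutions $\zeta = \alpha\,(s\eta+r)^{(ps-qr)/s^2}e^{q\eta/s}$, $\zeta = \alpha\,(\eta-\xi_1)^{(q\xi_1+p)/(t(\xi_1-\xi_2))}(\eta-\xi_2)^{(q\xi_2+p)/(t(\xi_2-\xi_1))}$, and $\zeta = \alpha\,(\eta-\xi)^{q/t}e^{-(q\xi+p)/(t(\eta-\xi))}$, exactly the closed forms you derive, and reading the conditions off the exponents. Your additional rigor step (uniqueness for first-order linear ODEs plus analytic continuation, or alternatively the degree comparison) merely fills in a point the paper leaves implicit.
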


\begin{proof}
The assertions follow by explicitly solving the equations: for some constant $\alpha$,
\begin{align*}
& \text{(a)} \qquad \zeta =
\alpha {(s\eta+r)}^{\frac{ps-qr}{s^2}}
e^{\frac{q \eta}{s}} ,
\\
& \text{(b)} \qquad \zeta =
\alpha
{(\eta-\xi_1)}^{\frac{q\xi_1+p}{t(\xi_1-\xi_2)}}
{(\eta-\xi_2)}^{\frac{q\xi_2+p}{t(\xi_2-\xi_1)}} ,
\\
& \text{(c)} \qquad \zeta =
\alpha {(\eta-\xi)}^{\frac{q}{t}} e^{\frac{-(q \xi + p)}{t(\eta-\xi)}} .
\qedhere
\end{align*}
\end{proof}

\begin{lemma}\label{lem:PolyExtnDE}
In the setup of Theorem \ref{thm:PolyExtn}, if $n=2$ and $B\neq 0$ then (a) implies (b).
\end{lemma}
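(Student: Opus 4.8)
The plan is to follow the complexify--divide--differentiate strategy used in Example~\ref{ex:ehot}, reducing the extension problem to a single first-order linear ODE in one barred variable and then invoking Proposition~\ref{diffyqs:prop}. First I would normalize. Since $z^t C z$ can be absorbed into $w$, I may assume $C=0$. Because $B\neq 0$ is a nonzero symmetric form, after a linear change of coordinates in $z$ (which preserves the rank condition \eqref{eq:theABcondition} and keeps $B$ symmetric) I may assume that the $\bar z_2^2$-coefficient of $Q$, namely $\bar B_{22}$, is nonzero; such coordinates exist because a nonzero symmetric form takes a nonzero value on some vector. Then $w-Q$ is, up to the unit $\bar B_{22}$, a Weierstrass polynomial of degree $2$ in $\bar z_2$. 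Working on the complexification $\widetilde M=\{w=Q\}\subset\C^5$ as in Proposition~\ref{prop:Complexification}, I apply Weierstrass division in $\bar z_2$ to write $f=h\,(Q-w)+a(z,\bar z_1,w)+b(z,\bar z_1,w)\,\bar z_2$; the function $f$ extends holomorphically precisely when the remainder is holomorphic, i.e.\ when $b\equiv 0$ and $a_{\bar z_1}\equiv 0$. Since once $b\equiv 0$ is known the surviving relation forces $a_{\bar z_1}\equiv 0$, the whole problem reduces to proving $b\equiv 0$.

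Next, the single CR field $L=Q_{\bar z_2}\partial_{\bar z_1}-Q_{\bar z_1}\partial_{\bar z_2}$ kills both $f$ and $Q-w$, so $L(a+b\bar z_2)$ vanishes on $\widetilde M$ and is therefore divisible by $w-Q$ (which is irreducible, being of degree $1$ in $w$). Comparing powers of $\bar z_2$ (both sides have $\bar z_2$-degree $2$) lets me solve for the auxiliary quotient from the top coefficient and then eliminate $a_{\bar z_1}$, leaving a single first-order linear ODE for $b$ in the variable $\eta=\bar z_1$, with $z,w$ as parameters, of the form $(p+q\eta)\,b=(r+s\eta+t\eta^2)\,b_{\eta}$ --- exactly the shape treated in Proposition~\ref{diffyqs:prop}.

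A direct computation then identifies the constants. The leading coefficients satisfy $t=-\tfrac{2\,\overline{\det B}}{\bar B_{22}}$ and $q=\tfrac{2\,\overline{\det B}}{\bar B_{22}}$, so that crucially $q=-t$ independently of the parameters $(z,w)$; moreover $s=-2p$, and $p$, viewed as a linear form in $z$, vanishes identically if and only if $\ker B\subseteq\ker A^*$, i.e.\ if and only if the rank condition \eqref{eq:theABcondition} fails (here $\ker B$ is spanned by $(B_{22},-B_{12})$ when $\det B=0$). These parameter-free relations are what make the arithmetic obstructions in Proposition~\ref{diffyqs:prop} fail automatically.

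Finally I would run Proposition~\ref{diffyqs:prop} for each fixed $(z,w)$. If $\det B\neq 0$ then $t\neq 0$: in the distinct-roots case~(b) the two exponents sum to $q/t=-1$, so they cannot both be nonnegative integers, while $q\neq 0$ rules out a nonzero constant solution; the double-root case~(c) is excluded because $q/t=-1$ is not a positive integer. Thus $b\equiv 0$ with no use of \eqref{eq:theABcondition}, which is automatic here as $B$ is invertible. If instead $\det B=0$ but $B\neq 0$, then $t=q=0$ and we are in case~(a) with $p/s=-\tfrac{1}{2}\notin\Z_{>0}$; here \eqref{eq:theABcondition} guarantees $p\not\equiv 0$, which excludes both the nonconstant and the nonzero-constant polynomial solutions, again forcing $b\equiv 0$. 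Since $b$ is a polynomial vanishing for all parameter values, $b\equiv 0$ and hence $a_{\bar z_1}\equiv 0$, so $F(z,w)=a(z,w)$ is the desired extension; tracking weighted degrees through the division gives the degree statement, and uniqueness follows as usual from $M$ being generic off the CR singularity. The main obstacle is the elimination step producing the ODE together with the recognition of the relation $q=-t$ (and $s=-2p$); the secondary point needing care is the equivalence between $p\equiv 0$ and the failure of \eqref{eq:theABcondition}.
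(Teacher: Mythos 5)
Your proposal is correct, and it follows the same overall strategy as the paper's proof: complexify via Proposition~\ref{prop:Complexification}, apply Weierstrass division of $f$ by $Q-w$ in one barred variable, use the CR field together with divisibility by the degree-2 Weierstrass polynomial to produce a first-order ODE for the remainder coefficient $b$, and then invoke Proposition~\ref{diffyqs:prop} to force $b\equiv 0$ and hence $a_{\bar z_1}\equiv 0$. Two of your choices differ from the paper in ways worth noting. First, you divide in $\bar z_2$ under the weaker normalization $B_{22}\neq 0$ (and $C=0$), whereas the paper diagonalizes $B$ to $\operatorname{diag}(1,\epsilon)$ and divides in $\bar z_1$; both are legitimate, and I verified your coefficient identities: up to an overall sign convention one gets $q=-t$ proportional to $\overline{\det B}$ and $s=-2p$. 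Second, and this is a genuine improvement, in the case $\det B\neq 0$ your parameter-free identity $q/t=-1$ makes the two exponents in Proposition~\ref{diffyqs:prop}(b) sum to $-1$, so they can never both be nonnegative integers, and $q\neq 0$ kills constant solutions; this rules out nonzero polynomial solutions for \emph{every} $(z,w)$, whereas the paper (in its $\epsilon=1$ case) resorts to an open-dense perturbation argument in $w$ to avoid integer exponents. Your identity holds in the paper's normalization as well ($q=2$, $t=-2$, $s=-2p$ there), so the published argument could be streamlined the same way. One caveat: your blanket assertion that $p\equiv 0$ if and only if $\ker B\subseteq\ker A^*$ if and only if \eqref{eq:theABcondition} fails is not correct in general --- for instance $A=0$, $B=I$ gives $p\equiv 0$ while \eqref{eq:theABcondition} holds --- but it is correct precisely in the case $\det B=0$, $B\neq 0$, which is the only place you use it (there $\ker B$ is spanned by $(B_{22},-B_{12})$ and $p$ is, up to a nonzero factor, the pairing of $\overline{A^*(B_{22},-B_{12})^t}$ with $z$), so the proof stands as written.
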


\begin{proof}
Let $f(z,\bar{z})$ be a polynomial CR function written in terms of the
$z,\bar{z}$ variables.  The CR vector field
\begin{equation}
L =
Q_{\bar{z}_2} \frac{\partial}{\partial \bar{z}_1}
-
Q_{\bar{z}_1} \frac{\partial}{\partial \bar{z}_2}
\end{equation}
is a CR vector field defined intrinsically on $M$ in the $z$ variables.
As $f$ is CR, we have $Lf = 0$.

Because $B\neq 0$, by a linear change of coordinates in the
$z$ variables the symmetric matrix $B$ can be put into the
form
$\left[ \begin{smallmatrix} 1 & 0 \\ 0 & \epsilon \end{smallmatrix} \right]$
for some $\epsilon = 0,1$.  In particular,
\begin{equation}
Q(z,\bar{z}) = z^*Az +
\bar{z}_1^2 + 
\epsilon \bar{z}_2^2 + z^tCz .
\end{equation}

Since $f$ is a function of $z$ and $\bar{z}$, it does not
depend on $\bar{w}$; so, when we complexify we work in the variables
$z_1,z_2,\bar{z}_1,\bar{z}_2,w$, and we treat the barred variables
as independent.
See Proposition~\ref{prop:Complexification}.
We consider $f$ to be a function on the complex submanifold $\widetilde{M}$ in $\C^5$ given by
$w = Q(z,\bar{z})$.  The complexified vector field $L$
is tangent to this manifold.

We apply the Weierstrass division algorithm to $f$ using the variable $\bar{z}_1$
 and write
\begin{equation}
f(z,\bar{z}) = h(z,\bar{z},w) \bigl(Q(z,\bar{z})-w\bigr)
+ a(z,\bar{z}_2,w) +
b(z,\bar{z}_2,w) \bar{z}_1 .
\end{equation}
The polynomial $f$ equals a holomorphic polynomial $g(z,w)$ on $M$
if and only if
\begin{equation}
a(z,\bar{z}_2,w) +
b(z,\bar{z}_2,w) \bar{z}_1 - g(z,w)
\end{equation}
is divisible by $Q-w$, which is of degree 2 in $\bar{z}_1$, and hence
if and only if $a+b \bar{z}_1-g$ is identically zero.
In other words, $f$ is equal to a holomorphic polynomial $g$ on 
$M$ if and only if $a_{\bar{z}_2}\equiv 0$ and $b \equiv 0$.

Since $L$ kills both $f$ and $Q-w$, on $\widetilde{M}$ we have
\begin{equation}
L\bigl(
a(z,\bar{z}_2,w) +
b(z,\bar{z}_2,w) \bar{z}_1 \bigr) = 0 .
\end{equation}
Let us write as a matrix $A = 
\left[
\begin{smallmatrix} \alpha & \beta \\ \gamma & \delta \end{smallmatrix}
\right]$ and
$L$ as
\begin{equation}
L =
\bigl(
\gamma z_1 + \delta z_2 +
2 \epsilon \bar{z}_2 \bigr)
\frac{\partial}{\partial \bar{z}_1}
-
\bigl(\alpha z_1 + \beta z_2 + 2 \bar{z}_1\bigr)
\frac{\partial}{\partial \bar{z}_2} .
\end{equation}
Then on $\widetilde{M}$ we have
\begin{equation}
\begin{split}
0 & = L(
a +
b \bar{z}_1 )
\\
& =
\bigl(
\gamma z_1 + \delta z_2 +
2 \epsilon \bar{z}_2 \bigr) b
-
\bigl(\alpha z_1 + \beta z_2 + 2 \bar{z}_1\bigr)
\bigl(
a_{\bar{z}_2} +
b_{\bar{z}_2} \bar{z}_1 \bigr) 
\\
& =
\Bigl(\bigl(
\gamma z_1 + \delta z_2 +
2 \epsilon \bar{z}_2 \bigr) b
-
(\alpha z_1 + \beta z_2)
a_{\bar{z}_2}
\Bigr)
-
\bigl(
2
a_{\bar{z}_2}
+
(\alpha z_1 + \beta z_2)
b_{\bar{z}_2}
\bigr) \bar{z}_1
-
2
b_{\bar{z}_2} \bar{z}_1^2 .
\end{split}
\end{equation}
As this is on $\widetilde{M}$, let us substitute for $\bar{z}_1^2$ in this expression
using the defining equation for $\widetilde{M}$.
We find 
\begin{multline}
\Bigl(\bigl(
\gamma z_1 + \delta z_2 +
2 \epsilon \bar{z}_2 \bigr) b
-
(\alpha z_1 + \beta z_2)
a_{\bar{z}_2}
\Bigr)
\\
-
\bigl(
2 a_{\bar{z}_2}
+
(\alpha z_1 + \beta z_2)
b_{\bar{z}_2}
\bigr) \bar{z}_1
-
2
b_{\bar{z}_2}
\Bigl(
w - (\alpha z_1 + \beta z_2) \bar{z}_1
-
(\gamma z_1 + \delta z_2) \bar{z}_2 - 
( z^tCz + \epsilon \bar{z}_2^2 )
\Bigr)
\\
=
\Bigl(\bigl(
\gamma z_1 + \delta z_2 +
2 \epsilon \bar{z}_2 \bigr) b
-
(\alpha z_1 + \beta z_2)
a_{\bar{z}_2}
-
2
\bigl(w - 
(\gamma z_1 + \delta z_2) \bar{z}_2 - 
( z^tCz + \epsilon \bar{z}_2^2 )
\bigr)
b_{\bar{z}_2}
\Bigr)
\\
+
\bigl(
- 2 a_{\bar{z}_2}
+
(\alpha z_1 + \beta z_2)
b_{\bar{z}_2}
\bigr)
\bar{z}_1 .
\end{multline}
This expression is zero on $\widetilde{M}$, so it is divisible by
$Q-w$, which is a Weierstrass polynomial in $\bar{z}_1$ of degree 2.  Hence,
the expression is identically zero.
In particular,
\begin{equation}
2 a_{\bar{z}_2} = (\alpha z_1 + \beta z_2) b_{\bar{z}_2} ,
\end{equation}
and
\begin{multline}
0=\bigl(
\gamma z_1 + \delta z_2 +
2 \epsilon \bar{z}_2 \bigr) b
-
(\alpha z_1 + \beta z_2)
a_{\bar{z}_2}
-
2
\bigl(w - 
(\gamma z_1 + \delta z_2) \bar{z}_2 - 
( z^tCz + \epsilon \bar{z}_2^2 )
\bigr)
b_{\bar{z}_2}
\\
=
\bigl(
\gamma z_1 + \delta z_2 +
2 \epsilon \bar{z}_2 \bigr) b
-
\left(
\frac{(\alpha z_1 + \beta z_2)^2}{2}
+
2
\bigl(w - 
(\gamma z_1 + \delta z_2) \bar{z}_2 - 
( z^tCz + \epsilon \bar{z}_2^2 )
\bigr)
\right)
b_{\bar{z}_2}
\\
=
\bigl(
(\gamma z_1 + \delta z_2) +
2 \epsilon \bar{z}_2 \bigr) b
-
\left(
\left(
\frac{(\alpha z_1 + \beta z_2)^2}{2}
+
2 w
- z^tCz
\right)
-
2
(\gamma z_1 + \delta z_2) \bar{z}_2
- 
2
\epsilon \bar{z}_2^2
\right)
b_{\bar{z}_2}.
\end{multline}

First suppose that $\epsilon = 0$. Then either $\gamma$ or $\delta$ is nonzero: otherwise, $\rank \left[ \begin{smallmatrix} A^* \\ B \end{smallmatrix} \right] = 1$, contradicting (a) of Theorem \ref{thm:PolyExtn}. The equation is
\begin{equation}
( \gamma z_1 + \delta z_2 ) b
-
\left(
\left(
\frac{(\alpha z_1 + \beta z_2)^2}{2}
+
2 w
-z^tCz
\right)
-
2
(\gamma z_1 + \delta z_2) \bar{z}_2
\right)
b_{\bar{z}_2}
= 0 .
\end{equation}
On an open dense set of $z_1,z_2$, we have $\gamma z_1 +
\delta z_2 \not= 0$.
In the notation of part (a) of Proposition~\ref{diffyqs:prop},  $p = \gamma z_1 +
\delta z_2$, $q = 0$, and $s = -2(\gamma z_1 + \delta z_2)$, so $\frac{p}{s} =
-\frac{1}{2}$ on an open dense set of $z_1,z_2$ (and any $w$).  By Proposition~\ref{diffyqs:prop},
there is no nonzero polynomial solution $b$.  Hence $b= 0$, so $a_{\bar{z}_2} = 0$.
Thus, $a$ is the holomorphic polynomial we are seeking.

Now suppose that $\epsilon = 1$. Again, we must prove that $b = 0$.  The equation is
\begin{equation}
\bigl(
(\gamma z_1 + \delta z_2) +
2 \bar{z}_2 \bigr) b
-
\left(
\left(
\frac{(\alpha z_1 + \beta z_2)^2}{2}
+
2 w
-z^tCz
\right)
-
2
(\gamma z_1 + \delta z_2) \bar{z}_2
- 
2
\bar{z}_2^2
\right)
b_{\bar{z}_2}
= 0 .
\end{equation}
In the notation of parts (b)--(c) of 
Proposition~\ref{diffyqs:prop}, in the polynomial $r+s \eta+t \eta^2$
the $r$ depends on $w$.  That is, for any fixed $z_1,z_2$, we can change $w$ by
a small amount to change the roots of the quadratic by a small amount in any direction.
In other words, we can always achieve 
that neither $\frac{q\xi_1+p}{t(\xi_1-\xi_2)}$ nor
$\frac{q\xi_2+p}{t(\xi_1-\xi_2)}$ is a nonnegative integer.
So, for an open dense set of $z_1,z_2,w$,  the only polynomial
solution is $b=0$. Hence, $b=0$.  Again, this means that $a_{\bar{z}_2} = 0$, and
so $a$ is the desired holomorphic polynomial.

The claim in (b) of Theorem \ref{thm:PolyExtn} about degrees now follows from the division algorithm.  Consider $f(z,\bar{z})$ as a
weighted homogeneous function of $(z,\bar{z},w)$.  As can be seen by walking through the algorithm,
the remainder $F(z,w)$ after division by $Q(z,\bar{z})-w$, which is weighted homogeneous,
must also be weighted homogeneous of the same degree as $f$.
\end{proof}

If $B=0$ then no barred variable appears in a pure term
in the quadratic part $Q$. Thus, the Weierstrass theorem cannot be applied, and the preceding method of proof is not available. We therefore use a different method when $B=0$.

\begin{lemma}\label{lem:PolyExtnMa}
In the setup of Theorem \ref{thm:PolyExtn},
if $n=2$ and $B= 0$, then (a) implies (b).
\end{lemma}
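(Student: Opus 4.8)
The plan is to exploit the special structure that arises when $B=0$: condition (a) then reads $\rank A^* = 2$, so the $2\times 2$ matrix $A$ is invertible, and the quadric is $Q(z,\bar z) = z^*Az + z^tCz$ with holomorphic part $z^tCz$. The antiholomorphic derivatives $Q_{\bar z_1} = (Az)_1$ and $Q_{\bar z_2}=(Az)_2$ are then linearly independent \emph{linear} forms in $z$ (independent precisely because $A$ is invertible), so the CR vector field of Proposition~\ref{prop:formofCRvecs} is
\[ L = (Az)_2\,\frac{\partial}{\partial \bar z_1} - (Az)_1\,\frac{\partial}{\partial \bar z_2}, \]
whose coefficients $p_1 := (Az)_1$ and $p_2 := (Az)_2$ are coprime in $\C[z_1,z_2]$, being two nonproportional linear forms. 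First I would record the key identity $L\zeta = p_2 p_1 - p_1 p_2 = 0$, where $\zeta := z^*Az = p_1\bar z_1 + p_2\bar z_2$. Thus $L$ annihilates $\zeta$ as well as every function of $z$ alone.

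The heart of the argument is the claim that \emph{every} polynomial $f$ with $Lf=0$ is a polynomial in $z$ and the single quantity $\zeta$. To see this, note that $L$ lowers the degree in $\bar z$ by one, so it preserves the decomposition of $f$ into parts $f^{(d)}$ that are homogeneous of degree $d$ in $\bar z$ (with coefficients in $\C[z]$), and it suffices to treat each $f^{(d)}$ separately. Working over the field $\C(z)$ and introducing, on the open set where $p_2\neq 0$, the antiholomorphic coordinates $\zeta = p_1\bar z_1 + p_2\bar z_2$ and $\eta = \bar z_1$, a direct chain-rule computation gives $L = p_2\,\partial/\partial \eta$. Hence the kernel of $L$ on the degree-$d$ part is one-dimensional over $\C(z)$ and is spanned by $\zeta^d$; that is, $f^{(d)} = a_d(z)\,\zeta^d$ for some $a_d \in \C(z)$.

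The step I expect to be the main obstacle is upgrading $a_d$ from a rational function to a polynomial. Expanding $f^{(d)} = a_d(z)(p_1\bar z_1 + p_2\bar z_2)^d$ and comparing the coefficients of $\bar z_1^d$ and $\bar z_2^d$ shows that both $a_d p_1^d$ and $a_d p_2^d$ lie in $\C[z]$. Writing $a_d = u/v$ in lowest terms, $v$ must divide both $p_1^d$ and $p_2^d$; since $p_1$ and $p_2$ are coprime, so are $p_1^d$ and $p_2^d$, forcing $v$ to be constant. Thus $a_d \in \C[z]$ and $f = \sum_d a_d(z)\,(z^*Az)^d$ as an identity of polynomials.

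Finally I would complexify as in Proposition~\ref{prop:Complexification}, treating $\bar z$ as independent and working on $\widetilde M = \{w = Q(z,\bar z)\}$, where $z^*Az = w - z^tCz$. Substituting yields the holomorphic polynomial $F(z,w) := \sum_d a_d(z)\,(w - z^tCz)^d$, which restricts to $f$ on $M$. The weighted-degree and homogeneity claims in (b) follow because $\zeta = z^*Az$ has weighted degree $2$, matching the weight of $w$, so the substitution preserves weighted degree. Uniqueness follows again from the invertibility of $A$: if a holomorphic $G$ vanishes on $M$, then complexifying the identity $G(z,Q(z,\chi))\equiv 0$ and letting $\chi$ range over $\C^2$ (so that $\chi^tAz$ sweeps out all values of $w$ for each fixed $z\neq 0$) forces $G\equiv 0$.
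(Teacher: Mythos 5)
Your proof is correct, and it takes a genuinely different route from the paper's. The paper, too, begins from the observation that $B=0$ together with (a) forces $A$ to be invertible, but it then triangularizes $A$ by Schur's theorem and argues by a dimension count: for each degree $d$ the CR condition $Lf=0$ on homogeneous polynomials is written as a matrix equation $X_d c=0$, the rank of $X_d$ is computed via a block decomposition and an explicit pivoting/column-reduction argument, and the resulting dimension $\left\lfloor (d+2)^2/4 \right\rfloor$ of the space of homogeneous CR polynomials of degree $d$ is matched against the dimension of the space of weighted homogeneous polynomials in $(z,w)$, so that restrictions of the latter must span the former. You instead exploit the same special feature of the case $B=0$ --- that the coefficients $p_1=(Az)_1$, $p_2=(Az)_2$ of $L$ are holomorphic --- to make $L$ act $\C(z)$-linearly on polynomials in $\bar z$, straighten it to $p_2\,\partial/\partial\eta$ over $\C(z)$, identify the kernel on each $\bar z$-degree-$d$ piece as $\C(z)\cdot (z^*Az)^d$, and then use coprimality of $p_1,p_2$ (equivalent to invertibility of $A$) to upgrade the rational coefficient to a polynomial. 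Your approach buys an explicit description of the CR polynomials (they are exactly the elements of $\C[z][z^*Az]$) rather than merely their dimension, it makes uniqueness and the weighted-degree claims of (b) nearly automatic, and it is considerably shorter; what the paper's count buys is an argument that never needs to exhibit a generator of the kernel. Two spots in your write-up merit one extra line each: the passage from ``$f$ is CR on $M_{\mathit{CR}}$'' to the polynomial identity $Lf\equiv 0$ (standard complexification, equally implicit in the paper), and the degree claim, where to rule out cancellation among top-degree parts it is cleanest either to note that the summands $a_d(z)(z^*Az)^d$ have distinct degrees in $\bar z$, or to decompose $f$ into homogeneous parts first (using that $L$ preserves total degree, and that $Q$ itself is homogeneous of degree two) and apply your argument to each part.
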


\begin{proof}
Because $B=0$, the rank condition in (a) implies that $A$ has rank 2.
In this case, we put $A$
into the form $\left[ \begin{smallmatrix} 1 & \beta \\ 0 & \delta
\end{smallmatrix} \right]$ with $\delta\neq 0$.  That is, we first apply
the unitary from 
Schur's theorem to the $z$ variables
to make $A$ triangular, and then we rescale the $w$ to make the top left entry
1.

We order the monomials in $(z,\bar{z})$ by an ordering that satisfies
$z_1^{a_1} z_2^{a_2}
\bar{z}_1^{b_1} \bar{z}_2^{b_2} <
z_1^{a_3} z_2^{a_4}
\bar{z}_1^{b_3} \bar{z}_2^{b_4}$
if $a_1+a_2 < a_3+a_4$ or if
$a_1+a_2 = a_3+a_4$ and
$a_1 < a_3$, or if
$a_1+a_2 = a_3+a_4$ and
$a_1 = a_3$ and
$b_1 < b_3$. 
We use the CR vector field \begin{equation}
L =
Q_{\bar{z}_2} \frac{\partial}{\partial \bar{z}_1}
-
Q_{\bar{z}_1} \frac{\partial}{\partial \bar{z}_2}.
\end{equation}
A homogeneous polynomial $f$ is CR if and only if $Lf = 0$, which is a linear equation in the
coefficients of $f$.  Using the preceding ordering on the monomials to order the coefficients, we let $c$ be the
vector of coefficients of $f$. For homogeneous polynomials of degree $d$, the equation $Lf = 0$ can be written as a matrix equation $X_dc = 0$.

\begin{claim}
The rank $R_d$ of $X_d$ satisfies 
\begin{equation}\label{eq:rank}
R_d=\sum_{j=1}^d 2\left\lfloor \frac{j+1}{2} \right\rfloor(d-j+1) .
\end{equation}
\end{claim}

\begin{proof}
Because $A = \left[ \begin{smallmatrix} 1 & \beta \\ 0 & \delta
\end{smallmatrix} \right]$, we have
$Q = z_1 \bar{z}_1 + \beta z_2 \bar{z}_1 + \delta z_2 \bar{z}_2+ z^tCz$.
Then 
\begin{equation}
L =
Q_{\bar{z}_2} \frac{\partial}{\partial \bar{z}_1}
-
Q_{\bar{z}_1} \frac{\partial}{\partial \bar{z}_2}
=
\delta z_2 \frac{\partial}{\partial \bar{z}_1}
-
( z_1 + \beta z_2) \frac{\partial}{\partial \bar{z}_2} .
\end{equation}
Therefore,
\begin{equation}\label{eq:homogL}
L \Bigl[ z_1^{a_1} z_2^{a_2} \bar{z}_1^{b_1} \bar{z}_2^{b_2} \Bigr]
=
b_1
\delta
\,
z_1^{a_1} z_2^{a_2+1} 
\bar{z}_1^{b_1-1} \bar{z}_2^{b_2}
-
b_2
\,
z_1^{a_1+1} z_2^{a_2} 
\bar{z}_1^{b_1} \bar{z}_2^{b_2-1}
-
b_2
\beta
\,
z_1^{a_1} z_2^{a_2+1} 
\bar{z}_1^{b_1} \bar{z}_2^{b_2-1} .
\end{equation}
For each input monomial, we get in general 3 output monomials, although
some can be zero depending on whether $b_1 = 0$ or $b_2 = 0$.  Thus, for the matrix $X_d$,
each column has 0, 1, 2, or 3 nonzero entries.

For example, for degree 3, the matrix $X_3$ is given by

{
\tiny
\begin{equation*}
\arraycolsep=1pt
\begin{array}{c||cccc|cccccc|cccccc|cccc}
 & \mathbin{\rotatebox[origin=c]{90}{$\bar{z}_1^{3}$}} & \mathbin{\rotatebox[origin=c]{90}{$\bar{z}_1^{2}\bar{z}_2$}} & \mathbin{\rotatebox[origin=c]{90}{$\bar{z}_1\bar{z}_2^{2}$}} & \mathbin{\rotatebox[origin=c]{90}{$\bar{z}_2^{3}$}} & \mathbin{\rotatebox[origin=c]{90}{$z_1\bar{z}_1^{2}$}} & \mathbin{\rotatebox[origin=c]{90}{$z_1\bar{z}_1\bar{z}_2$}} & \mathbin{\rotatebox[origin=c]{90}{$z_1\bar{z}_2^{2}$}} & \mathbin{\rotatebox[origin=c]{90}{$z_2\bar{z}_1^{2}$}} & \mathbin{\rotatebox[origin=c]{90}{$z_2\bar{z}_1\bar{z}_2$}} & \mathbin{\rotatebox[origin=c]{90}{$z_2\bar{z}_2^{2}$}} & \mathbin{\rotatebox[origin=c]{90}{$z_1^{2}\bar{z}_1$}} & \mathbin{\rotatebox[origin=c]{90}{$z_1^{2}\bar{z}_2$}} & \mathbin{\rotatebox[origin=c]{90}{$z_1z_2\bar{z}_1$}} & \mathbin{\rotatebox[origin=c]{90}{$z_1z_2\bar{z}_2$}} & \mathbin{\rotatebox[origin=c]{90}{$z_2^{2}\bar{z}_1$}} & \mathbin{\rotatebox[origin=c]{90}{$z_2^{2}\bar{z}_2$}} & \mathbin{\rotatebox[origin=c]{90}{$z_1^{3}$}} & \mathbin{\rotatebox[origin=c]{90}{$z_1^{2}z_2$}} & \mathbin{\rotatebox[origin=c]{90}{$z_1z_2^{2}$}} & \mathbin{\rotatebox[origin=c]{90}{$z_2^{3}$}}\\
\hline
\hline
\bar{z}_1^{3} & . & . & . & . & . & . & . & . & . & . & . & . & . & . & . & . & . & . & . & .\\
\bar{z}_1^{2}\bar{z}_2 & . & . & . & . & . & . & . & . & . & . & . & . & . & . & . & . & . & . & . & .\\
\bar{z}_1\bar{z}_2^{2} & . & . & . & . & . & . & . & . & . & . & . & . & . & . & . & . & . & . & . & .\\
\bar{z}_2^{3} & . & . & . & . & . & . & . & . & . & . & . & . & . & . & . & . & . & . & . & .\\
\hline
z_1\bar{z}_1^{2} & . & -1 & . & . & . & . & . & . & . & . & . & . & . & . & . & . & . & . & . & .\\
z_1\bar{z}_1\bar{z}_2 & . & . & -2 & . & . & . & . & . & . & . & . & . & . & . & . & . & . & . & . & .\\
z_1\bar{z}_2^{2} & . & . & . & -3 & . & . & . & . & . & . & . & . & . & . & . & . & . & . & . & .\\
z_2\bar{z}_1^{2} & 3\delta & -\beta & . & . & . & . & . & . & . & . & . & . & . & . & . & . & . & . & . & .\\
z_2\bar{z}_1\bar{z}_2 & . & 2\delta & -2\beta & . & . & . & . & . & . & . & . & . & . & . & . & . & . & . & . & .\\
z_2\bar{z}_2^{2} & . & . & \delta & -3\beta & . & . & . & . & . & . & . & . & . & . & . & . & . & . & . & .\\
\hline
z_1^{2}\bar{z}_1 & . & . & . & . & . & -1 & . & . & . & . & . & . & . & . & . & . & . & . & . & .\\
z_1^{2}\bar{z}_2 & . & . & . & . & . & . & -2 & . & . & . & . & . & . & . & . & . & . & . & . & .\\
z_1z_2\bar{z}_1 & . & . & . & . & 2\delta & -\beta & . & . & -1 & . & . & . & . & . & . & . & . & . & . & .\\
z_1z_2\bar{z}_2 & . & . & . & . & . & \delta & -2\beta & . & . & -2 & . & . & . & . & . & . & . & . & . & .\\
z_2^{2}\bar{z}_1 & . & . & . & . & . & . & . & 2\delta & -\beta & . & . & . & . & . & . & . & . & . & . & .\\
z_2^{2}\bar{z}_2 & . & . & . & . & . & . & . & . & \delta & -2\beta & . & . & . & . & . & . & . & . & . & .\\
\hline
z_1^{3} & . & . & . & . & . & . & . & . & . & . & . & -1 & . & . & . & . & . & . & . & .\\
z_1^{2}z_2 & . & . & . & . & . & . & . & . & . & . & \delta & -\beta & . & -1 & . & . & . & . & . & .\\
z_1z_2^{2} & . & . & . & . & . & . & . & . & . & . & . & . & \delta & -\beta & . & -1 & . & . & . & .\\
z_2^{3} & . & . & . & . & . & . & . & . & . & . & . & . & . & . & \delta & -\beta & . & . & . & .\\
\end{array}

\end{equation*}
}

The rows and columns are marked according to the corresponding monomial.
The zero entries are marked simply by dots for clarity.
We have also divided $X_d$ into blocks as follows.
Given
$j \geq 1$, consider the columns corresponding to monomials of total degree
$j$ in $\bar{z}$ and rows of total degree $j-1$ in $\bar{z}$.
Due to the form of $L$, the only monomials that
result from monomials of total degree $j$ in $\bar{z}$ are in fact
monomials of total degree $j-1$ in $\bar{z}$. We see that, except for the
zero rows and columns, the matrix has a direct sum structure with these
blocks as the summands.  Denote the $j$th block, the block
whose columns correspond monomials of degree $j$ in $\bar{z}$,
by $B_j$.
Let $r(j,d)$ denote the rank of $B_j$.
The rank $R_d$ is the sum of the ranks of the blocks, that is,
\begin{equation}
R_d = r(1,d) + r(2,d) + \cdots + r(d,d).
\end{equation}

Consider the $j$th block $B_j$.  To compute the rank of
$B_j$, we will perform column reduction on the block.
Recall that $\delta \not= 0$.
It is sufficient to consider as pivots the entries in the matrix that
correspond to the negative integers ($-b_2$) and the $b_1 \delta$,
at least when $b_1$ and $b_2$ are nonzero.
And for the column reduction, we only need to take into account that these
entries are nonzero.
For example, the block $B_4$ for $d=9$
can be written and further subdivided as follows:

{
\tiny
\begin{equation*}
\arraycolsep=1pt
\begin{array}{c||ccccc|ccccc|ccccc|ccccc|ccccc|ccccc}
 & \mathbin{\rotatebox[origin=c]{90}{$z_1^{5}\bar{z}_1^{4}$}} & \mathbin{\rotatebox[origin=c]{90}{$z_1^{5}\bar{z}_1^{3}\bar{z}_2$}} & \mathbin{\rotatebox[origin=c]{90}{$z_1^{5}\bar{z}_1^{2}\bar{z}_2^{2}$}} & \mathbin{\rotatebox[origin=c]{90}{$z_1^{5}\bar{z}_1\bar{z}_2^{3}$}} & \mathbin{\rotatebox[origin=c]{90}{$z_1^{5}\bar{z}_2^{4}$}} & \mathbin{\rotatebox[origin=c]{90}{$z_1^{4}z_2\bar{z}_1^{4}$}} & \mathbin{\rotatebox[origin=c]{90}{$z_1^{4}z_2\bar{z}_1^{3}\bar{z}_2$}} & \mathbin{\rotatebox[origin=c]{90}{$z_1^{4}z_2\bar{z}_1^{2}\bar{z}_2^{2}$}} & \mathbin{\rotatebox[origin=c]{90}{$z_1^{4}z_2\bar{z}_1\bar{z}_2^{3}$}} & \mathbin{\rotatebox[origin=c]{90}{$z_1^{4}z_2\bar{z}_2^{4}$}} & \mathbin{\rotatebox[origin=c]{90}{$z_1^{3}z_2^{2}\bar{z}_1^{4}$}} & \mathbin{\rotatebox[origin=c]{90}{$z_1^{3}z_2^{2}\bar{z}_1^{3}\bar{z}_2$}} & \mathbin{\rotatebox[origin=c]{90}{$z_1^{3}z_2^{2}\bar{z}_1^{2}\bar{z}_2^{2}$}} & \mathbin{\rotatebox[origin=c]{90}{$z_1^{3}z_2^{2}\bar{z}_1\bar{z}_2^{3}$}} & \mathbin{\rotatebox[origin=c]{90}{$z_1^{3}z_2^{2}\bar{z}_2^{4}$}} & \mathbin{\rotatebox[origin=c]{90}{$z_1^{2}z_2^{3}\bar{z}_1^{4}$}} & \mathbin{\rotatebox[origin=c]{90}{$z_1^{2}z_2^{3}\bar{z}_1^{3}\bar{z}_2$}} & \mathbin{\rotatebox[origin=c]{90}{$z_1^{2}z_2^{3}\bar{z}_1^{2}\bar{z}_2^{2}$}} & \mathbin{\rotatebox[origin=c]{90}{$z_1^{2}z_2^{3}\bar{z}_1\bar{z}_2^{3}$}} & \mathbin{\rotatebox[origin=c]{90}{$z_1^{2}z_2^{3}\bar{z}_2^{4}$}} & \mathbin{\rotatebox[origin=c]{90}{$z_1z_2^{4}\bar{z}_1^{4}$}} & \mathbin{\rotatebox[origin=c]{90}{$z_1z_2^{4}\bar{z}_1^{3}\bar{z}_2$}} & \mathbin{\rotatebox[origin=c]{90}{$z_1z_2^{4}\bar{z}_1^{2}\bar{z}_2^{2}$}} & \mathbin{\rotatebox[origin=c]{90}{$z_1z_2^{4}\bar{z}_1\bar{z}_2^{3}$}} & \mathbin{\rotatebox[origin=c]{90}{$z_1z_2^{4}\bar{z}_2^{4}$}} & \mathbin{\rotatebox[origin=c]{90}{$z_2^{5}\bar{z}_1^{4}$}} & \mathbin{\rotatebox[origin=c]{90}{$z_2^{5}\bar{z}_1^{3}\bar{z}_2$}} & \mathbin{\rotatebox[origin=c]{90}{$z_2^{5}\bar{z}_1^{2}\bar{z}_2^{2}$}} & \mathbin{\rotatebox[origin=c]{90}{$z_2^{5}\bar{z}_1\bar{z}_2^{3}$}} & \mathbin{\rotatebox[origin=c]{90}{$z_2^{5}\bar{z}_2^{4}$}}\\
\hline
\hline
z_1^{6}\bar{z}_1^{3} & . & \boxed{*} & . & . & . & . & . & . & . & . & . & . & . & . & . & . & . & . & . & . & . & . & . & . & . & . & . & . & . & .\\
z_1^{6}\bar{z}_1^{2}\bar{z}_2 & . & . & \boxed{*} & . & . & . & . & . & . & . & . & . & . & . & . & . & . & . & . & . & . & . & . & . & . & . & . & . & . & .\\
z_1^{6}\bar{z}_1\bar{z}_2^{2} & . & . & . & \boxed{*} & . & . & . & . & . & . & . & . & . & . & . & . & . & . & . & . & . & . & . & . & . & . & . & . & . & .\\
z_1^{6}\bar{z}_2^{3} & . & . & . & . & \boxed{*} & . & . & . & . & . & . & . & . & . & . & . & . & . & . & . & . & . & . & . & . & . & . & . & . & .\\
\hline
z_1^{5}z_2\bar{z}_1^{3} & \boxed{*} & ? & . & . & . & . & * & . & . & . & . & . & . & . & . & . & . & . & . & . & . & . & . & . & . & . & . & . & . & .\\
z_1^{5}z_2\bar{z}_1^{2}\bar{z}_2 & . & * & ? & . & . & . & . & \boxed{*} & . & . & . & . & . & . & . & . & . & . & . & . & . & . & . & . & . & . & . & . & . & .\\
z_1^{5}z_2\bar{z}_1\bar{z}_2^{2} & . & . & * & ? & . & . & . & . & \boxed{*} & . & . & . & . & . & . & . & . & . & . & . & . & . & . & . & . & . & . & . & . & .\\
z_1^{5}z_2\bar{z}_2^{3} & . & . & . & * & ? & . & . & . & . & \boxed{*} & . & . & . & . & . & . & . & . & . & . & . & . & . & . & . & . & . & . & . & .\\
\hline
z_1^{4}z_2^{2}\bar{z}_1^{3} & . & . & . & . & . & \boxed{*} & ? & . & . & . & . & * & . & . & . & . & . & . & . & . & . & . & . & . & . & . & . & . & . & .\\
z_1^{4}z_2^{2}\bar{z}_1^{2}\bar{z}_2 & . & . & . & . & . & . & \boxed{*} & ? & . & . & . & . & * & . & . & . & . & . & . & . & . & . & . & . & . & . & . & . & . & .\\
z_1^{4}z_2^{2}\bar{z}_1\bar{z}_2^{2} & . & . & . & . & . & . & . & * & ? & . & . & . & . & \boxed{*} & . & . & . & . & . & . & . & . & . & . & . & . & . & . & . & .\\
z_1^{4}z_2^{2}\bar{z}_2^{3} & . & . & . & . & . & . & . & . & * & ? & . & . & . & . & \boxed{*} & . & . & . & . & . & . & . & . & . & . & . & . & . & . & .\\
\hline
z_1^{3}z_2^{3}\bar{z}_1^{3} & . & . & . & . & . & . & . & . & . & . & \boxed{*} & ? & . & . & . & . & * & . & . & . & . & . & . & . & . & . & . & . & . & .\\
z_1^{3}z_2^{3}\bar{z}_1^{2}\bar{z}_2 & . & . & . & . & . & . & . & . & . & . & . & \boxed{*} & ? & . & . & . & . & * & . & . & . & . & . & . & . & . & . & . & . & .\\
z_1^{3}z_2^{3}\bar{z}_1\bar{z}_2^{2} & . & . & . & . & . & . & . & . & . & . & . & . & \boxed{*} & ? & . & . & . & . & * & . & . & . & . & . & . & . & . & . & . & .\\
z_1^{3}z_2^{3}\bar{z}_2^{3} & . & . & . & . & . & . & . & . & . & . & . & . & . & * & ? & . & . & . & . & \boxed{*} & . & . & . & . & . & . & . & . & . & .\\
\hline
z_1^{2}z_2^{4}\bar{z}_1^{3} & . & . & . & . & . & . & . & . & . & . & . & . & . & . & . & \boxed{*} & ? & . & . & . & . & * & . & . & . & . & . & . & . & .\\
z_1^{2}z_2^{4}\bar{z}_1^{2}\bar{z}_2 & . & . & . & . & . & . & . & . & . & . & . & . & . & . & . & . & \boxed{*} & ? & . & . & . & . & * & . & . & . & . & . & . & .\\
z_1^{2}z_2^{4}\bar{z}_1\bar{z}_2^{2} & . & . & . & . & . & . & . & . & . & . & . & . & . & . & . & . & . & \boxed{*} & ? & . & . & . & . & * & . & . & . & . & . & .\\
z_1^{2}z_2^{4}\bar{z}_2^{3} & . & . & . & . & . & . & . & . & . & . & . & . & . & . & . & . & . & . & \boxed{*} & ? & . & . & . & . & * & . & . & . & . & .\\
\hline
z_1z_2^{5}\bar{z}_1^{3} & . & . & . & . & . & . & . & . & . & . & . & . & . & . & . & . & . & . & . & . & \boxed{*} & ? & . & . & . & . & * & . & . & .\\
z_1z_2^{5}\bar{z}_1^{2}\bar{z}_2 & . & . & . & . & . & . & . & . & . & . & . & . & . & . & . & . & . & . & . & . & . & \boxed{*} & ? & . & . & . & . & * & . & .\\
z_1z_2^{5}\bar{z}_1\bar{z}_2^{2} & . & . & . & . & . & . & . & . & . & . & . & . & . & . & . & . & . & . & . & . & . & . & \boxed{*} & ? & . & . & . & . & * & .\\
z_1z_2^{5}\bar{z}_2^{3} & . & . & . & . & . & . & . & . & . & . & . & . & . & . & . & . & . & . & . & . & . & . & . & \boxed{*} & ? & . & . & . & . & *\\
\hline
z_2^{6}\bar{z}_1^{3} & . & . & . & . & . & . & . & . & . & . & . & . & . & . & . & . & . & . & . & . & . & . & . & . & . & \boxed{*} & ? & . & . & .\\
z_2^{6}\bar{z}_1^{2}\bar{z}_2 & . & . & . & . & . & . & . & . & . & . & . & . & . & . & . & . & . & . & . & . & . & . & . & . & . & . & \boxed{*} & ? & . & .\\
z_2^{6}\bar{z}_1\bar{z}_2^{2} & . & . & . & . & . & . & . & . & . & . & . & . & . & . & . & . & . & . & . & . & . & . & . & . & . & . & . & \boxed{*} & ? & .\\
z_2^{6}\bar{z}_2^{3} & . & . & . & . & . & . & . & . & . & . & . & . & . & . & . & . & . & . & . & . & . & . & . & . & . & . & . & . & \boxed{*} & ?\\
\end{array}

\end{equation*}
}

Nonzero entries (the negative integers and the multiples of $\delta$)
are marked by ``$*$'',
entries that are
possibly zero (the multiples of $\beta$) by ``$?$'',
and entries that are zero by dots.
The boxed stars are the pivots for column reduction---more on this below.
The $j$th block $B_j$ has sub-blocks defined as follows:
For each fixed monomial $z^\alpha$, for $\abs{\alpha} = d-j < d$,
take the columns corresponding to
the monomials $z^{\alpha} \bar{z}^{\gamma}$ where $\abs{\gamma} = j$.
There are $d-j+1$ such sub-blocks because that is the number of monomials of
the form $z^\alpha$ with $\abs{\alpha} = d-j$.
These sub-blocks are no longer in the form of a direct sum---they overlap.  We divide these blocks into a top half and a
bottom half, depending on the holomorphic part of the output monomial.

We wish to show that $B_j$ is of full rank.
To see this fact, we show that each column has one nonzero entry
that can be used as a pivot.
When we say ``nonzero entry'' below we mean one of the
starred entries, that is, either the negative integer or the entry with
$\delta$, and by ``possibly nonzero entry'' we mean the question marks,
that is, the multiples of $\beta$.

In the first sub-block we consider all the nonzero
entries in the top half of the sub-block and the nonzero entry in the
first column of the bottom half of the sub-block.

By a column operation,
the pivot in the bottom half of the first sub-block can be used
zero out
the first nonzero entry in the top half of the second sub-block.
The first column of the second sub-block can be used to zero out
the possibly nonzero entry in the second column.
We therefore use as a pivot the nonzero entry in the second column of the
bottom half of the second sub-block.  In the remaining columns of the second
sub-block we use the nonzero entries in the top half as pivots.

The two pivots in the bottom half of
the second sub-block can zero out the first two nonzero entries in the top
half of the third sub-block.  Similarly, the possibly nonzero entries can
be zeroed out in the second and third column of the third sub-block, and the
nonzero entries in the first three columns in the bottom half of
the third sub-block are pivots.

In each further sub-block, we can zero out an additional nonzero entry in
the top half.  We continue this procedure until all the entries in the top
half of the sub-block can be zeroed out.  In all further sub-blocks
we use the nonzero entries in the bottom half as pivots.

All in all, we find that as we move through the sub-blocks from left to right,
we find pivots in every column until we run out of nonzero entries in the
top half of the sub-block.
If the matrix $B_j$ does not have enough sub-blocks so that
we never run out of the nonzero entries in the top half, then
$B_j$ has a pivot entry in every column.
If on the other hand $B_j$ has more sub-blocks,
then in the blocks after we ran out of nonzero entries in
the top half,  every nonzero
entry in the bottom half of the sub-block is a pivot.
Therefore we have a pivot in
every row of $B_j$.  In $B_j$, there are $d-j+1$ sub-blocks,
and in each sub-block there are $j$ nonzero entries in the top half.
As there is a pivot in every column if $d-j+1 \leq j$
or in every row if $d-j+1 > j$, we find that $B_j$ is of full rank.
In other words, the rank of $B_j$ is
\begin{equation}
r(j,d) =
\begin{cases}
(j+1)(d-j+1) & \text{if } d-j+1 \leq j , \\
j(d-j+2) & \text{if } d-j+1 > j .
\end{cases}
\end{equation}

Thus, the rank of $X_d$ is
\begin{equation}
R_d= \sum_{j=\lceil\frac{d+1}{2}\rceil}^d (j+1)(d-j+1)+\sum_{j=1}^{\lceil\frac{d+1}{2}\rceil-1} j(d-j+2)   .
\end{equation}
In order to partially combine these sums so that the first one starts at $j=1$, we rewrite the second summand as $(j+1)(d-j+1)+(2j-d-1)$. Using $\lceil\frac{d+1}{2}\rceil-1=\lfloor\frac{d}{2}\rfloor$, we get
\begin{equation} \label{eq:sumr}
R_d= \sum_{j=1}^d (j+1)(d-j+1)+\sum_{j=1}^{\lfloor\frac{d}{2}\rfloor} (2j-d-1).
\end{equation}
Thus, to prove the desired formula \eqref{eq:rank} for $R_d$ we need to show that the second sum in \eqref{eq:sumr} equals 
\begin{equation}\sum_{j=1}^d \left\{2\left\lfloor \frac{j+1}{2} \right\rfloor-(j+1)\right\} (d-j+1).\end{equation}
The term in braces equals $0$ when $j$ is odd and equals $-1$ when $j$ is even. We rewrite the sum as 
\begin{equation}-\sum_{k=1}^{\lfloor\frac{d}{2}\rfloor}  (d-2k+1).\end{equation} Clearly this equals the second sum in \eqref{eq:sumr}. 
The claim is proved.
\end{proof}

Let ${\mathit{CR}}^d(M)$ be the space of degree-$d$ homogeneous
polynomials
$f(z,\bar{z})$ that, when considered as
functions on $M$ (parametrized by $z$), are CR functions on
$M_{\mathit{CR}}$. Then ${\mathit{CR}}^d(M)$ has dimension $\binom{d+3}{3}-R_d$. It is not hard to show that
\begin{equation}
\binom{d+3}{3} - \sum_{j=1}^d 2\left\lfloor \frac{j+1}{2} \right\rfloor (d-j+1)  
=
\left\lfloor
\frac{{(d+2)}^2}{4}
\right\rfloor ,
\end{equation} so by our formula for $R_d$ the dimension of ${\mathit{CR}}^d(M)$ is $\left\lfloor
\frac{{(d+2)}^2}{4}
\right\rfloor $.
But the dimension of the space of weighted homogeneous polynomials
 in $z$ and $w$ of degree $d$ is
\begin{equation}
\sum\limits_{j+2k=d} (j+1)
=
\sum_{k=0}^{\lfloor d/2 \rfloor}
(d-2k+1)
=
\left\lfloor
\frac{{(d+2)}^2}{4}
\right\rfloor .
\end{equation}
Because the dimension of ${\mathit{CR}}^d(M)$ is the same, the restrictions
to $M$ of the weighted homogeneous polynomials in $z$ and $w = Q(z,\bar{z})$
of degree $d$
span ${\mathit{CR}}^d(M)$.

If $f(z,\bar{z})$ is any polynomial that is CR on $M_{{\mathit{CR}}}$, then equation \eqref{eq:homogL}
implies that the homogeneous parts of $f$ are CR, and hence are in the span
of monomials in $z$ and $w = Q(z,\bar{z})$.
\end{proof}

\begin{lemma} \label{lem:PolyExtnGeneraln}
In the setup of Theorem \ref{thm:PolyExtn},
(a) implies (b).
\end{lemma}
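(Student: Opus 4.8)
The plan is to induct on $n$, taking Lemmas~\ref{lem:PolyExtnDE} and~\ref{lem:PolyExtnMa} together as the base case $n=2$ (they cover $B\neq 0$ and $B=0$) and reducing the dimension by restricting the $z$-variables to a generic complex hyperplane. First I would reduce to the case that $f$ is homogeneous. Each CR vector field $L_{k,\ell}=\rho_{\bar{z}_\ell}\partial_{\bar{z}_k}-\rho_{\bar{z}_k}\partial_{\bar{z}_\ell}$ of Proposition~\ref{prop:formofCRvecs} has coefficients $Q_{\bar{z}_j}$ that are homogeneous of degree one, so $L_{k,\ell}$ preserves the total degree in $(z,\bar{z})$; hence the CR condition $L_{k,\ell}f=0$ holds term by term on the homogeneous parts of $f$, and it suffices to extend each homogeneous piece. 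The weighted-homogeneity of the resulting $F$ then yields the degree claim in (b) automatically.

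For the inductive step, fix $n\geq 3$ and assume (a)$\Rightarrow$(b) in dimension $n-1$. For a codimension-one subspace $V\subset\C^n$ with columns of $W$ a basis, the restriction of $Q$ to $M_V:=M\cap(V\times\C)$ is again a quadric of the same type, with matrices $A_V=W^*AW$, $B_V=W^tBW$, $C_V=W^tCW$, where $B_V$ and $C_V$ stay symmetric and $A_V^*=W^*A^*W$. A CR function $f$ on $M$ restricts to a CR function $f|_V$ on $M_V$, since the CR vector fields of $M_V$ are precisely the CR vector fields of $M$ tangent to $V\times\C$. The first key step is then a genericity lemma: for a generic $V$ one has $\rank\begin{bmatrix}A_V^*\\B_V\end{bmatrix}\geq 2$. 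This is a Zariski-open condition on $V$ in the Grassmannian, so it is enough to produce one good $V$; to rule out the degenerate possibility that every hyperplane drops the rank, I would argue that such universal rank loss would force both $B$ and $A^*$ to have rank at most one (the all-$B$-rows minors vanishing forces $B$ to have rank $\leq 1$, and likewise for the sesquilinear form built from $A^*$), contradicting $\rank\begin{bmatrix}A^*\\B\end{bmatrix}\geq 2$. For such a good $V$, the inductive hypothesis produces a weighted-homogeneous polynomial $F_V(z,w)$ of the same degree with $f|_V=F_V\bigl(z,Q(z,\bar{z})\bigr)$ on $V$.

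It remains to patch the family $\{F_V\}$ into a single global polynomial $F$, and this gluing is the main obstacle. Two good hyperplanes $V,V'$ meet in a subspace $S=V\cap V'$ of codimension two, which has positive dimension because $n\geq 3$; this is exactly why I slice by hyperplanes rather than by $2$-planes. On $S$ both $F_V$ and $F_{V'}$ restrict $f$, so $F_V(z,Q)=F_{V'}(z,Q)=f$ there, and since the map $(z,\bar{z})\mapsto\bigl(z,Q(z,\bar{z})\bigr)$ is dominant (because $\bar{\partial}Q\not\equiv 0$ makes $Q(z,\cdot)$ nonconstant for generic $z$), the values $Q(z,\bar{z})$ are dense in $w$ for fixed generic $z\in S$; hence $F_V$ and $F_{V'}$ agree as polynomials on $S\times\C$. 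Using this compatibility on the positive-dimensional overlaps, together with the uniqueness of the extension, I would define $F(z,w):=F_V(z,w)$ for any good $V\ni z$ and check it is independent of the choice, so that $F$ is unambiguously determined on a dense set and agrees with each polynomial $F_V$; a final density argument over the good hyperplanes shows $F$ is itself a weighted-homogeneous polynomial and that $F(z,Q)=f$ identically. The hardest point is verifying that the locally defined $F_V$ genuinely assemble into one global polynomial rather than merely a consistent system, and for this the dominance of $(z,\bar{z})\mapsto(z,Q)$ and the positive-dimensional intersections $V\cap V'$ are the crucial ingredients.
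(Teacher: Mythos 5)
Your slicing skeleton (reduce to homogeneous $f$, restrict to generic linear slices, invoke the $n=2$ lemmas, recombine) is the same as the paper's strategy, but your argument has a genuine gap at exactly the point you flag as the main obstacle: nothing you propose shows that the glued object $F$ is a polynomial, or even holomorphic. Pointwise well-definedness of $F(z,w):=F_V(z,w)$ together with polynomiality of each $F_V$ on $V\times\C$ does not produce a global polynomial; a function can restrict to a polynomial of bounded degree on every member of a family of hyperplanes without being given by any coherent global formula, unless one controls how the coefficients of $F_V$ vary with $V$. Your ``final density argument'' names the missing step rather than supplying it. There is also a secondary flaw in the overlap argument: the claim that the values $Q(z,\bar z)$ are dense in $w$ for fixed $z\in S$ is false as stated, since $\bar z$ is determined by $z$; it becomes correct only after complexifying and treating $\bar z$ as an independent variable, and even then it requires $\bar{\partial}(Q|_S)\not\equiv 0$ for the particular intersection $S=V\cap V'$ --- a genericity condition on pairs of hyperplanes that your well-definedness step silently assumes for every pair of good hyperplanes through a given point.

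The paper closes this hole with two ideas that have no counterpart in your proposal. First, it parametrizes the slices explicitly (two-planes $R$ depending on parameters $\omega_1,\omega_2$) and proves that the coefficients $c_{\alpha j}$ of the slice extensions are holomorphic in the slice parameters, i.e.\ independent of $\bar{\omega}_1,\bar{\omega}_2$: the function $f(R\xi,\overline{R\xi})$ is CR on the larger quadric in the variables $(\xi,\omega,w)$, and applying the CR vector fields in the $\bar{\omega}_\ell$-directions hits only the coefficients, because each $\xi^\alpha Q^j$ is already CR. Second, it substitutes $\omega_2=z_3/z_2$ (and, in a second chart, $\omega_1=z_3/z_1$) to convert this holomorphic family of slice extensions into a single rational extension of $f$, with possible poles only on $\{z_2=0\}$ (respectively $\{z_1=0\}$); uniqueness of the extension near CR points forces the two rational extensions to agree, so any pole would lie in $\{z_1=z_2=0\}$, which has codimension two, hence there are no poles and $F$ is a polynomial. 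This also explains why the paper can slice by two-planes directly down to the base case, with no induction and no need for positive-dimensional overlaps: agreement of the slice extensions comes from their holomorphic dependence on the slice, not from pairwise intersections. To salvage your hyperplane induction you would need to prove an analogue of this holomorphic (indeed rational) dependence of $F_V$ on $V$, at which point the intersection bookkeeping becomes unnecessary.
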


\begin{proof}
By Lemmas \ref{lem:PolyExtnDE} and \ref{lem:PolyExtnMa},
(a) implies (b) when $n=2$.
It suffices to prove
that (a) implies (b) for $n > 2$.

Choose a linear map $R \colon \C^2 \to \C^n$, and define
$M_R \subset \C^2 \times \C$ by
\begin{equation}
w = Q(R\xi,\overline{R\xi})
\end{equation}
for variables $(\xi,w) \in \C^2 \times \C$.  The rank condition on $M$
guarantees that $M_R$ satisfies the rank condition as well for an open dense
set of $R$.  Therefore we have that (a) implies (b) for $M_R$ for an open
dense set of $R$.

Consider $R$ of the form
\begin{equation}
\begin{bmatrix}
1&0\\
0&1\\
\omega_1 & \omega_2
\end{bmatrix}
\end{equation}
for some column vectors $\omega_j\in\C^{n-2}$.

Let $f(z,\bar{z})$ be a polynomial homogeneous of degree $d$ that, when considered as a
   function on $M$, is a CR function. Because (a) implies (b) for $M_R$, 
we find a polynomial $F_R(\xi,w)$ such that
\begin{equation} \label{eqn:polyextnslices}
f(R\xi,\overline{R\xi})
= F_R\bigl(\xi,Q(R\xi,\overline{R\xi})\bigr)
= \sum\limits_{2j+\abs{\alpha}=d}\,
c_{\alpha j}(\omega_1,\omega_2,\bar{\omega}_1,\bar{\omega}_2)
\xi^\alpha Q^j .
\end{equation}
We first need to prove that all the coefficients $c_{\alpha j}$ are independent
of $\bar{\omega}_1$ and $\bar{\omega}_2$.
The function $f(R\xi,\overline{R\xi})$ is a CR function on the
quadric in $\xi,\omega,w$ coordinates $w = Q(R\xi,\overline{R\xi}) =
\widetilde{Q}(\xi,\omega,\bar{\xi},\bar{\omega})$.
For simplicity let us assume $n=3$, as the argument is the same for
higher dimensions.  That is, let us suppose that $\omega_j \in \C$.

The quadric $w = \widetilde{Q}$ given above
can be written as
\begin{equation}
w=Q(\xi_1,\xi_2,\omega_1 \xi_1 +
\omega_2\xi_2,\bar{\xi}_1,\bar{\xi}_2,\overline{\omega_1 \xi_1 + \omega_2\xi_2})
=
\widetilde{Q}(\xi_1,\xi_2,\omega_1,\omega_2,\bar{\xi}_1,\bar{\xi}_2,\bar{\omega_1},\bar{\omega}_2) .
\end{equation}
For $\ell=1,2$ consider the CR vector field
\begin{equation}
L = L_\ell
=
\widetilde{Q}_{\bar{\xi}_\ell}\frac{\partial}{\partial\bar{\omega}_\ell}
-
\widetilde{Q}_{\bar{\omega}_\ell}\frac{\partial}{\partial\bar{\xi}_\ell}
=
\left(Q_{\bar{z}_\ell}+Q_{\bar{z}_3}\bar{\omega}_\ell\right)\frac{\partial}{\partial\bar{\omega}_\ell}
-
Q_{\bar{z}_3}\bar{\xi}_\ell\frac{\partial}{\partial\bar{\xi}_\ell}.
\end{equation}
Since $f$ is CR, applying $L$ to \eqref{eqn:polyextnslices} gives
us that the $c_{\alpha j}$'s are independent of $\bar{\omega}_1$ and $\bar{\omega}_2$.
That is because $\xi^\alpha Q^j$ are CR and hence $L$ only hits the $c_{\alpha j}$.

Now suppose $\omega_1 = 0$.  We have
\begin{equation}
f(
\xi_1,\xi_2, \omega_2 \xi_2,
\bar{\xi}_1,\bar{\xi}_2, \bar{\omega}_2 \bar{\xi}_2
)
= \sum\limits_{2j+\abs{\alpha}=d}\, c_{\alpha
j}(0,\omega_2)\xi^\alpha Q^j .
\end{equation}
Setting $\xi_1 = z_1$,
$\xi_2 = z_2$,
$\omega_2 = \frac{z_3}{z_2}$, we obtain
\begin{equation}
f(z,\bar{z})
= \sum\limits_{2j+\abs{\alpha}=d}\, c_{\alpha
j}\biggl(0,\frac{z_3}{z_2}\biggr)z_1^{\alpha_1}z_2^{\alpha_2} Q^j .
\end{equation}
We have a rational extension to $\C^{4}$,
with a possible pole when $z_2 = 0$.  The same argument with $\omega_2
= 0$ obtains another rational extension with a possible pole at $z_1 = 0$.
Outside any possible poles the extensions are identical as the holomorphic
extension near CR points is unique.  The poles are therefore
only on the set $z_2 = z_1 = 0$, which implies that there are no poles.
Thus we find a polynomial extension $F(z,w)$.
The argument for $n > 3$ follows in the same way.
\end{proof}

\begin{proof}[Proof of Theorem~\ref{thm:PolyExtn}]
Proposition~\ref{prop:LinExtnFails2} shows that 
(c) $\Rightarrow$ (a).  
Next suppose that (b) is true.  If $h(z,\bar{z})$
is (real) linear and CR, then it is
the restriction of a holomorphic function in $z$ only since $w$ is of weight
two.  The conclusion of (c) follows because $M$ is parametrized by $z$.  
Lemma \ref{lem:PolyExtnGeneraln} proves that (a) $\Rightarrow$ (b).
That (a) $\Leftrightarrow$ (d) follows from
Proposition~\ref{prop:equivofAandD}.
\end{proof}


\section{Extending real-analytic CR functions}\label{sec:real-analytic}

In this section we use Theorem~\ref{thm:PolyExtn} to prove 
Theorem~\ref{thm:mainext}. First we obtain a formal extension.

\begin{lemma} \label{lem:formalext}
Let $(z,w) \in \C^n \times \C$ be the coordinates and
near the origin, let $M \subset \C^{n+1}$ be a codimension-2 submanifold given by
\begin{equation}
w = \rho(z,\bar{z}) = Q(z,\bar{z}) + E(z,\bar{z}) ,
\end{equation}
where $\rho$ is real-analytic and $O(\snorm{z}^2)$ and $E$ is
$O(\snorm{z}^3)$.
Assume $\bar{\partial}Q\not\equiv 0$.
Let $M^{\mathit{quad}}$ be the quadric model defined by $w = Q(z,\bar{z})$,
and suppose that $M^{\mathit{quad}}$ satisfies one of the conditions of
Theorem~\ref{thm:PolyExtn}.

Suppose $f(z,\bar{z})$ is a real-analytic function defined near the origin that, when considered as a
function on $M$ (parametrized by $z$), is a CR function on $M_{\mathit{CR}}$.

Then there exists a unique formal power series $F(z,w)$ such that $f$ and $F$
agree on $M$, that is,
      \begin{equation}
      f(z,\bar{z}) = F\bigl(z, \rho(z,\bar{z}) \bigr),
\qquad \text{or} \qquad
      f(z,\bar{z}) = F(z,w) + a(z,\bar{z},w) \bigl(w - \rho(z,\bar{z}) \bigr) ,
      \avoidbreak
      \end{equation}
\enlargethispage{\baselineskip}
as formal power series (for some formal power series $a$).
\end{lemma}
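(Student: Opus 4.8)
The plan is to build $F$ one weighted-homogeneous piece at a time, grading all power series by the weighting that gives each $z_j$ and $\bar{z}_j$ weight $1$ and $w$ weight $2$, and to feed each graded piece into Theorem~\ref{thm:PolyExtn}(b). Write $\rho = Q + E$ with $Q$ the weighted-degree-$2$ (quadratic) part and $E = O(\snorm{z}^3)$, and correspondingly split the intrinsic CR vector fields of Proposition~\ref{prop:formofCRvecs} as $L_{k,\ell} = L_{k,\ell}^{Q} + \widetilde{L}_{k,\ell}$, where $L_{k,\ell}^{Q} = Q_{\bar{z}_\ell}\partial_{\bar{z}_k} - Q_{\bar{z}_k}\partial_{\bar{z}_\ell}$ has homogeneous degree-$1$ coefficients (it is exactly the quadric CR field) and $\widetilde{L}_{k,\ell}$ comes from $E$ and has coefficients of degree $\geq 2$. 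Thus $L_{k,\ell}^{Q}$ preserves total degree in $(z,\bar{z})$ while $\widetilde{L}_{k,\ell}$ strictly raises it. I also record the elementary fact that for any holomorphic $G(z,w)$ the composition $G(z,\rho(z,\bar{z}))$ is automatically CR on $M$: applying $L_{k,\ell}$ and using $\partial_{\bar{z}_k}[G(z,\rho)] = G_w(z,\rho)\rho_{\bar{z}_k}$ gives $L_{k,\ell}[G(z,\rho)] = G_w(\rho_{\bar{z}_\ell}\rho_{\bar{z}_k} - \rho_{\bar{z}_k}\rho_{\bar{z}_\ell}) = 0$.

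For existence I would induct on $d \geq 0$, producing weighted-homogeneous polynomials $F^{(0)},\dots,F^{(d)}$ (of weighted degrees $0,\dots,d$) so that $f - \sum_{k=0}^{d}F^{(k)}(z,\rho)$ vanishes to order $d+1$ in $(z,\bar{z})$. Suppose $F^{(0)},\dots,F^{(d-1)}$ are already chosen and set $g = f - \sum_{k=0}^{d-1}F^{(k)}(z,\rho)$, which by the inductive hypothesis is $O(\snorm{z}^d)$; write $g_d$ for its homogeneous degree-$d$ part, a polynomial. By the composition fact each $F^{(k)}(z,\rho)$ is CR on $M$, hence $g$ is CR on $M$, i.e.\ $L_{k,\ell}g = 0$ for all $k,\ell$. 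Now extract the lowest-degree part: since every term of $g$ has degree $\geq d$ and every piece of $L_{k,\ell}$ either preserves degree (namely $L_{k,\ell}^{Q}$) or raises it (namely $\widetilde{L}_{k,\ell}$), the homogeneous degree-$d$ part of $0 = L_{k,\ell}g$ is precisely $L_{k,\ell}^{Q}g_d$. Hence $L_{k,\ell}^{Q}g_d = 0$ for all $k,\ell$, which says exactly that $g_d$ is a polynomial CR function on the quadric $M^{\mathit{quad}}$. Since $M^{\mathit{quad}}$ satisfies (a), and hence (b), of Theorem~\ref{thm:PolyExtn}, there is a unique weighted-homogeneous polynomial $F^{(d)}(z,w)$ of weighted degree $d$ with $F^{(d)}(z,Q) = g_d$. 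Taking this $F^{(d)}$ closes the induction: because $F^{(d)}(z,\rho) = F^{(d)}(z,Q) + O(\snorm{z}^{d+1}) = g_d + O(\snorm{z}^{d+1})$ and $g - g_d = O(\snorm{z}^{d+1})$, the new remainder $g - F^{(d)}(z,\rho)$ is $O(\snorm{z}^{d+1})$. The formal series $F = \sum_{d \geq 0} F^{(d)}$ then satisfies $f(z,\bar{z}) = F(z,\rho(z,\bar{z}))$, and the second (division) form follows from Proposition~\ref{prop:Complexification}.

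For uniqueness, suppose $F$ and $\widetilde{F}$ are two formal solutions and let $H = F - \widetilde{F}$, so that $H(z,\rho(z,\bar{z})) = 0$. If $H \neq 0$, let $d$ be the least weighted degree with $H^{(d)} \neq 0$; then the homogeneous degree-$d$ part of $H(z,\rho)$ is $H^{(d)}(z,Q)$, so $H^{(d)}(z,Q) \equiv 0$. But $H^{(d)}$ is then a weighted-homogeneous extension of the zero CR function on $M^{\mathit{quad}}$, and the uniqueness clause of Theorem~\ref{thm:PolyExtn}(b) forces $H^{(d)} = 0$, a contradiction. (Equivalently, since $\bar{\partial}Q \not\equiv 0$, for generic fixed $z$ the map $\bar{z} \mapsto Q(z,\bar{z})$ is nonconstant, so $H^{(d)}(z,\cdot)$ vanishes at infinitely many values of $w$ and is therefore identically zero.)

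The step I expect to be the main obstacle, and the one carrying the real content, is the degree-grading argument that the leading homogeneous part $g_d$ inherits the quadric CR condition $L_{k,\ell}^{Q}g_d = 0$ from the full condition $L_{k,\ell}g = 0$ on $M$; everything else is bookkeeping together with an application of Theorem~\ref{thm:PolyExtn}(b). Note also that no convergence question arises here, since the lemma asserts only a \emph{formal} power-series extension; convergence of $F$ is a separate matter handled afterward.
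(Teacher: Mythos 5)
Your proposal is correct and follows essentially the same route as the paper: induct on the degree of the remainder, use the fact that the quadric CR fields $L^{Q}_{k,\ell}$ preserve degree while the error terms from $E$ raise it to conclude that the lowest-order homogeneous part is CR on $M^{\mathit{quad}}$, extend it by Theorem~\ref{thm:PolyExtn}(b), subtract, and repeat, with uniqueness coming from the uniqueness of each weighted-homogeneous extension. The only cosmetic differences are that you spell out explicitly the composition fact that $G(z,\rho)$ is automatically CR on $M$ and give a slightly more detailed uniqueness argument, both of which the paper leaves implicit.
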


\begin{proof}
Suppose the order of $f$ at the origin is $k$ and write
\begin{equation}
f(z,\bar{z}) = f_k(z,\bar{z}) + \tilde{f}(z,\bar{z}) ,
\end{equation}
where $f_k$ is the degree-$k$ homogeneous part of $f$.
CR vector fields on $M$ have the form
\begin{equation}
L = L_{j,\ell}
=
\rho_{\bar{z}_\ell} \frac{\partial}{\partial \bar{z}_j}
-
\rho_{\bar{z}_j} \frac{\partial}{\partial \bar{z}_\ell}
=
\bigl(Q_{\bar{z}_\ell} +  E_{\bar{z}_\ell} \bigr) \frac{\partial}{\partial \bar{z}_j}
-
\bigl(Q_{\bar{z}_j} +  E_{\bar{z}_j} \bigr) \frac{\partial}{\partial \bar{z}_\ell} .
\end{equation}
CR vector fields on $M^{\mathit{quad}}$ 
have the form
\begin{equation}
L^{\mathit{quad}} =
L^{\mathit{quad}}_{j,\ell} =
Q_{\bar{z}_\ell} \frac{\partial}{\partial \bar{z}_j}
-
Q_{\bar{z}_j} \frac{\partial}{\partial \bar{z}_\ell} .
\end{equation}
Then
\begin{equation}
0 = Lf = Lf_k + L\tilde{f}
=
Q_{\bar{z}_\ell} \frac{\partial f_k}{\partial \bar{z}_j}
-
Q_{\bar{z}_j} \frac{\partial f_k}{\partial \bar{z}_\ell} 
+ O(\snorm{z}^{k+1})
=
L^{\mathit{quad}} f_k + O(\snorm{z}^{k+1}) .
\end{equation}
As $L^{\mathit{quad}} f_k$ is of order $k$, we have that $f_k$ is CR on
$M^{\mathit{quad}}$.  Hence by Theorem~\ref{thm:PolyExtn}
there exists a weighted homogeneous $F_k(z,w)$
such that
\begin{equation}
f_k(z,\bar{z}) =
F_k\bigl(z,Q(z,\bar{z})\bigr) .
\end{equation}
The function
$g(z,\bar{z}) = F_k\bigl(z,\rho(z,\bar{z})\bigr) =
F_k\bigl(z,Q(z,\bar{z})+E(z,\bar{z})\bigr)$ is a CR function on $M$,
and furthermore the $k$th order part of $g$ is equal to $f_k$.
Now consider the function $h=f-g$.  The function $h$
is CR, and is of order at least $k+1$.
By induction therefore we obtain a formal power series $F$.  The series
$F$ is unique as the $F_k$ in each step giving the $k$th order terms is unique.
\end{proof}

\begin{lemma}\label{lem:ConfFormalPowerSeries}
Let $M \subset \C^{2}$ be a
real-analytic submanifold of real codimension 2 with a
CR singularity
at $0 \in M$. Assume that $M$ is defined by $w = \rho(z,\bar{z})$, for $(z,w) \in \C^2$,
where $\rho$ is  $O(\snorm{z}^2)$,
and assume that the power series of $\rho(z,\bar{z})$ at $0$ contains a nonzero term of the form
$\bar{z}^k$ or
$z \bar{z}$.

Suppose $f$ is a real-analytic function on $M$ that admits a formal power series
$F(z,w)$, that is, formally for some formal power series $a$,
      \begin{equation}
      f(z,\bar{z}) = F\bigl(z, \rho(z,\bar{z}) \bigr),
\qquad \text{or} \qquad
      f(z,\bar{z}) = F(z,w) + a(z,\bar{z},w) \bigl(w - \rho(z,\bar{z}) \bigr) .
      \qquad
      \end{equation}
Then $F$ is convergent.
\end{lemma}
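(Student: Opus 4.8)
The plan is to pass to the complexification, writing $\zeta$ for the independent variable replacing $\bar z$, so that $f(z,\zeta)$ and $\rho(z,\zeta)$ are honest convergent power series in two variables and the hypothesis reads $f(z,\zeta)=F\bigl(z,\rho(z,\zeta)\bigr)$ as formal power series, with $\rho=O(\snorm{z}^2)$ depending nontrivially on $\zeta$. After absorbing the pure holomorphic part of $\rho$ into $w$ by a biholomorphism of $(z,w)$ (which does not affect convergence of $F$), I split according to whether $\rho(0,\zeta)\not\equiv 0$, i.e.\ a pure $\bar z^k$ term survives, or $\rho(0,\zeta)\equiv 0$, in which case the hypothesis forces a nonzero $z\bar z$ term. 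The geometric content is that the map $\phi(z,\zeta)=\bigl(z,\rho(z,\zeta)\bigr)$ is a finite branched cover in the first case but has the entire line $\{z=0\}$ lying in the fiber over the origin in the second; this distinction is exactly the source of the difficulty and forces two different arguments.

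In the case $\rho(0,\zeta)\not\equiv 0$ the function $w-\rho(z,\zeta)$ is $\zeta$-regular of some order $m\ge 2$, so $\phi$ is a finite cover of degree $m$. I would first show that $f$ is constant on the fibers of $\phi$ as convergent functions. Formally, $f(z,\zeta)-f(z,\zeta')$ is divisible by $\rho(z,\zeta)-\rho(z,\zeta')$, since $F(z,a)-F(z,b)$ is divisible by $a-b$; both differences carry the automatic divided-difference factor $\zeta-\zeta'$, and after cancelling it the remaining cofactor of $\rho(z,\zeta)-\rho(z,\zeta')$ is $\zeta$-regular (of order $m-1$), so Weierstrass division upgrades the formal divisibility to convergent divisibility by uniqueness of the division remainder. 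Hence $f(z,\zeta)=f(z,\zeta')$ whenever $\rho(z,\zeta)=\rho(z,\zeta')$. I then set $\tilde F(z,w)=\frac{1}{2\pi i\, m}\oint f(z,\zeta)\,\frac{P_\zeta(z,\zeta,w)}{P(z,\zeta,w)}\,d\zeta$, the averaged symmetric function of the $m$ roots of the Weierstrass polynomial $P$ of $w-\rho$, which is convergent; constancy on fibers gives $\tilde F\bigl(z,\rho(z,\zeta)\bigr)=f(z,\zeta)$, and since $\rho$ depends nontrivially on $\zeta$ the pullback $\phi^{*}$ is injective on formal power series, whence $\tilde F=F$ and $F$ converges.

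In the case $\rho(0,\zeta)\equiv 0$ the $z\bar z$ term lets me scale so that $\rho=z\zeta+R$, where every monomial of $R$ is divisible by $z\zeta$ and has degree $\ge 3$. Writing $\rho=z\,B(z,\zeta)$ with $B(0,\zeta)=\zeta+O(\zeta^2)$, I resolve $\phi$ by the blow-up substitution $u=w/z$: the equation $B(z,\zeta)=u$ has $\partial_\zeta B(0,0)=1\ne 0$, so the implicit function theorem yields a convergent solution $\zeta=\zeta(z,u)$ with $\zeta(0,0)=0$ and $\rho\bigl(z,\zeta(z,u)\bigr)=zu$. Then $G(z,u):=f\bigl(z,\zeta(z,u)\bigr)$ is convergent, and substituting the convergent series $\zeta(z,u)$ (which vanishes at the origin) into the formal identity gives $G(z,u)=F\bigl(z,\rho(z,\zeta(z,u))\bigr)=F(z,zu)$ as formal series. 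Comparing coefficients shows that the Taylor coefficients $c_{ij}$ of $F$ are, up to reindexing, among the coefficients of the convergent $G$, and a geometric bound of the form $\sabs{c_{ij}}\le C\kappa^{\,i+2j}$ then forces $F$ to converge.

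The main obstacle is precisely this second case: since $\phi$ is not finite there, neither Weierstrass preparation in $\zeta$ nor the symmetric-function construction applies, and solving $w=\rho(z,\zeta)$ for $\zeta$ directly produces a pole along $\{z=0\}$. The decisive step is the blow-up $u=w/z$, whose success hinges on the $z\bar z$ hypothesis guaranteeing $\partial_\zeta B(0,0)\ne 0$; once this holds, the problem collapses to a routine comparison of coefficients between $F$ and the convergent function $G$. The other technical point to get right, in the first case, is the passage from formal to convergent divisibility, which I would handle cleanly through the uniqueness of the Weierstrass division remainder rather than by any direct estimate.
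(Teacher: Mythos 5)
Your proof is correct, and its second half takes a genuinely different (and more elementary) route than the paper's. In the case of a pure $\bar z^k$ term the two arguments share the same core device: Weierstrass preparation in the complexified variable $\zeta$ and the symmetrized average $\widetilde F(z,w)=\frac{1}{m}\sum_{j} f\bigl(z,\xi_j(z,w)\bigr)$ over the roots; you differ only in how $\widetilde F$ is identified with the formal series $F$. The paper truncates the formal cofactor $a$ and uses symmetry in $\zeta_1,\dots,\zeta_m$ to show the error term vanishes to arbitrarily high order, while you first prove $f$ is constant on the fibers of $\phi(z,\zeta)=\bigl(z,\rho(z,\zeta)\bigr)$ --- your divided-difference argument, with uniqueness of Weierstrass division upgrading formal to convergent divisibility, is sound --- and then appeal to injectivity of $\phi^*$ on formal power series. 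That injectivity is the one assertion you leave unproved; it is standard, and in your setting it follows quickly because $w-\rho(z,\zeta)$ is prime in $\C[[z,\zeta,w]]$ and does not divide $\rho_\zeta$, so that $H\circ\phi=0$ forces ${(w-\rho)}^N \mid H(z,w)$ for every $N$, whence $H=0$. The real divergence is in the case of a $z\bar z$ term with no pure terms: there the paper invokes Moser's theorem to put $M$ exactly into the normal form $w=z\bar z$ and then reindexes Taylor coefficients, whereas you avoid normal-form theory altogether by writing $\rho=zB(z,\zeta)$ with $\partial_\zeta B(0,0)\neq 0$ and rectifying the graph with the implicit function theorem, so that $\rho\bigl(z,\psi(z,u)\bigr)=zu$; the convergent function $G(z,u)=f\bigl(z,\psi(z,u)\bigr)$ then satisfies $G(z,u)=F(z,zu)$ formally, and the reindexed Cauchy estimate $\sabs{F_{jk}}\le C\epsilon^{-j-2k}$ finishes exactly as in the paper. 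Your version buys self-containedness: it rests only on the implicit function theorem rather than on Moser's deep convergence theorem, and it works directly in the original coordinates; the paper's version is shorter on the page but imports the full strength of the equivalence to $w=z\bar z$.
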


The proof is for the most part contained in \cite{crext2}, but since the
proof is not long we prove it again in the full generality needed in this
paper.

\begin{proof}
Parametrizing $M$ by $z$, we write $f(z,\bar{z})$ for the value of $f$ on $M$ at
$\bigl(z,\rho(z,\bar{z})\bigr)$ as usual.
We may locally complexify and treat $z$ and $\bar{z}$ as independent
variables.

\textbf{Case 1:}
The power series of $\rho(z,\bar{z})$ contains $\bar{z}^k$.

Suppose that $k\geq 2$ is the smallest $k$ for which a term $\bar{z}^k$ exists.
The equation for $M$ is of the form
\begin{equation}
   c \bar{z}^k + E(z,\bar{z}) - w = 0 ,
\end{equation}
for some $c \not= 0$. We complexify and consider the equation as
an equation in $z,\bar{z},w$ (treating $\bar{z}$ as independent variable).
Using the Weierstrass preparation theorem,
we may locally solve for $\bar{z}$ in terms of $w$ and
$z$.  Let us denote these solutions by $\xi_1(z,w)$, \ldots, $\xi_k(z,w)$;
if $(z,w) \in M$, then
one of these is the complex conjugate of $z$.
The $\xi_j$'s are not holomorphic, but any holomorphic symmetric function of
$\xi_1,\ldots,\xi_k$ is holomorphic.
(See, e.g., \cite{Whitney:book}*{Lemma 8A in Chapter 1}.)
So,
\begin{equation} \label{eq:avgout}
\widetilde{F}(z,w) = \frac{1}{k}\sum\limits_{j=1}^k f\bigl(z,\xi_j(z,w)\bigr)
\end{equation}
is a well-defined holomorphic function in a neighborhood of the origin.

Given variables $\zeta_1,\ldots,\zeta_k$, consider as a formal power series
\begin{equation}
\begin{split}
\frac{1}{k} \sum_{j=1}^k f(z,\zeta_j)
& =
\frac{1}{k} \sum_{j=1}^k \Bigl(
F(z,w) + a(z,\zeta_j,w) \bigl(w - \rho(z,\zeta_j) \bigr)
\Bigr)
\\
& =
F(z,w) + 
\frac{1}{k} \sum_{j=1}^k
a(z,\zeta_j,w) \bigl(w - \rho(z,\zeta_j) \bigr) .
\end{split}
\end{equation}

We wish to show that the second term on the right is zero when we plug in
$\xi_1,\ldots,\xi_k$.  Of course this is a formal power series, so we
cannot just plug in.  It is, however, symmetric in 
$\zeta_1,\ldots,\zeta_k$,
 so it is a formal power series in the elementary symmetric polynomials of
$\zeta_1,\ldots,\zeta_k$, and the elementary symmetric polynomials of
$\xi_1,\ldots,\xi_k$ are well-defined power series.
For some large $m$, consider $a_m$ to be the terms of order at most $m$.
Then formally,
\begin{equation}
\frac{1}{k} \sum_{j=1}^k
a_m(z,\zeta_j,w) \bigl(w - \rho(z,\zeta_j) \bigr)
=
\frac{1}{k} \sum_{j=1}^k
a(z,\zeta_j,w) \bigl(w - \rho(z,\zeta_j) \bigr)
\quad + \quad
O\left( {\snorm{(z,w)}}^{m+1}\right) .
\end{equation}
The left-hand side is a well-defined function symmetric
in $\zeta_1,\ldots,\zeta_k$ and is zero when we plug in
$\xi_1,\ldots,\xi_k$.  The symmetric functions of $\xi_j$
are of order at least 1, and the elementary symmetric polynomials have degree at most $k$.
Consequently,
\begin{equation}
\frac{1}{k} \sum_{j=1}^k
a(z,\xi_j,w) \bigl(w - \rho(z,\xi_j) \bigr)
=
0
+
O\left( {\snorm{(z,w)}}^{
\left\lfloor \frac{m+1}{k} \right\rfloor
}\right) .
\end{equation}
That is, as a formal power series,
$\widetilde{F}(z,w) = F(z,w)$.  Because $\widetilde{F}(z,w)$ converges,  so
does $F$.

\textbf{Case 2.} The power series of $\rho(z,\bar{z})$ contains $z \bar{z}$,
but no term of the form $\bar{z}^k$.

Via Moser~\cite{Moser85}, after a local 
biholomorphic change of variables at the origin, $M$ is given by
\begin{equation}
w = z\bar{z} .
\end{equation}
The power series
\begin{equation}
f(z,\bar{z}) =
\sum_{j,k} c_{k,j} z^k {\bar{z}}^j ,
\end{equation}
can be written in terms of $z$ and $w = z \bar{z}$
as
\begin{equation}
F(z,w) = \sum_{j,\ell} d_{\ell,j} z^\ell w^j .
\end{equation}
In particular, $c_{k,j} = 0$ if $k < j$, and
$d_{\ell,j} = c_{\ell+j,j}$.
The Cauchy estimates give
$\sabs{c_{k,j}} \leq \frac{M}{\epsilon^{k+j}}$ for some $\epsilon > 0$.
Because
$\sabs{d_{\ell,j}} = \sabs{c_{\ell+j,j}} \leq \frac{M}{\epsilon^{\ell}
\epsilon^{2j}}$, the series $F$ converges.
\end{proof}

\begin{proof}[Proof of Theorem~\ref{thm:mainext}]
It follows from the condition
$\rank \left[ \begin{smallmatrix} A^* \\ B \end{smallmatrix} \right] \geq 2$
that $\bar{\partial} Q\not\equiv 0$.
Apply Lemma~\ref{lem:formalext} to find a formal solution $F(z,w)$
that is equal to $f$ on $M$ (formally).

Consider a nonzero $c \in \C^n$.  Using coordinates
$(\xi,w) \in \C \times \C$, let $M_c$ be given by
\begin{equation}
w = \rho(c\xi,\overline{c\xi}) .
\end{equation}
The function $f(c\xi,\overline{c\xi})$ is equal (formally on $M$) to the
formal power series $F(c\xi,w)$.

Because $\bar{\partial} Q\not\equiv 0$, for an open dense set of $c \in \C^n$,
$\rho(c\xi,\overline{c\xi})$
has a nonzero nonholomorphic quadratic term; then Lemma~\ref{lem:ConfFormalPowerSeries}
applies, and $F(c\xi,w)$ converges.
Therefore, $F(z,w)$ converges
via a standard Baire category argument
(see e.g.\ \cite{BER:book}*{Theorem 5.5.30}).
\end{proof}


\section{Flattening} \label{sec:flattening}

A well-known problem for CR singular manifolds is to determine when
a CR singular manifold is flattenable, that is, a subset of a Levi-flat hypersurface.
In other words, $M$ is flattenable if there is locally
a nonconstant holomorphic function with
nonvanishing derivative on $M$ that is real-valued on $M$.
For codimension-2 nondegenerate (in the sense that the matrix $A$
is nondegenerate)
CR singular manifolds in 3 or more
dimensions, the problem has been almost completely solved by
Fang--Huang~\cite{FangHuang}, where they prove that the necessary condition of
nowhere minimality is sufficient in all but one unresolved case in  $\C^3$.
The question still remains in that unresolved exceptional case and for
degenerate manifolds.

As an application of our extension result, we give a new way of checking
flattenability that we hope will yield a complete solution.  The condition
we propose is an existence of a first integral of the complex tangent
bundle.  By this we mean a function $g$ defined on $M$ that is constant
on integral curves of $T^cM = TM \cap J(TM)$, where $J$ is the complex
structure.  In other words, $g$ is a function that is constant on the CR
orbits of $M_{\mathit{CR}}$.

\begin{cor}
Let $(z,w) \in \C^{n} \times \C$ be the coordinates and,
near the origin, let $M \subset \C^{n+1}$ be a codimension-2 submanifold given by
\begin{equation}
w = \rho(z,\bar{z}) = Q(z,\bar{z}) + E(z,\bar{z}) =
z^*Az + \overline{z^tBz} + z^tCz +
E(z, \bar{z}),
\end{equation}
where $\rho$ is real-analytic,
$A,B,C$ are complex $n \times n$  matrices, $B$ and $C$ are symmetric,
and $E$ is $O(\snorm{z}^3)$.
Further suppose that 
\begin{equation}
\rank \begin{bmatrix} A^* \\ B \end{bmatrix} \geq 2 .
\end{equation}

Then the following conditions are equivalent:
\begin{enumerate}[(a)]
  \item
$M$ is flattenable, that is, there exists a holomorphic function near the
origin with nonvanishing derivative that is real-valued on $M$.
  \item
The complex tangent bundle of $M$ has a real-valued nonconstant real-analytic first
integral $g$ defined on $M$
whose second derivatives in terms of $z$ and $\bar{z}$ do not all vanish
at the origin.
\end{enumerate}
\end{cor}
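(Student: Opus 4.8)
The plan is to prove the two implications separately, using Theorem~\ref{thm:mainext} as the bridge from (b) to (a) and a direct restriction argument for (a) $\Rightarrow$ (b). Two observations are used throughout. First, the rank hypothesis forces $\bar\partial Q \not\equiv 0$. Second, a real-valued real-analytic function $g$ on $M$ is a first integral of $T^cM$ if and only if it is a CR function: since $T^cM \otimes \C = T^{1,0}M \oplus T^{0,1}M$, the function $g$ is annihilated by every section of $T^cM$ exactly when $Lg = 0$ and $\overline{L}g = 0$ for all CR vector fields $L$, and for real $g$ the second condition is the complex conjugate of the first.

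For (a) $\Rightarrow$ (b), start from a holomorphic $\Phi$ with $d\Phi(0) \neq 0$ that is real-valued on $M$, and set $g(z,\bar z) = \Phi\bigl(z,\rho(z,\bar z)\bigr)$. This $g$ is real-analytic and real-valued on $M$. Because $\Phi$ is holomorphic, $\partial_{\bar z_j} g = \Phi_w\,\rho_{\bar z_j}$, so for any CR vector field (Proposition~\ref{prop:formofCRvecs})
\begin{equation}
L g = \rho_{\bar z_\ell}\,\Phi_w\,\rho_{\bar z_j} - \rho_{\bar z_j}\,\Phi_w\,\rho_{\bar z_\ell} = 0 ,
\end{equation}
and hence $g$ is CR, that is, a first integral. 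To check the second-derivative condition, note that $\rho = O(\snorm{z}^2)$, so the linear part of $g$ is the holomorphic linear form $\sum_j \Phi_{z_j}(0)\,z_j$; reality of $g$ forces this to coincide with its antiholomorphic conjugate, whence $\Phi_{z_j}(0) = 0$ for all $j$ and therefore $\Phi_w(0) \neq 0$. The degree-two part of $g$ is then $\Phi_w(0)\,Q(z,\bar z)$ plus a holomorphic quadratic, and since $\bar\partial Q \not\equiv 0$ its $\bar z$-dependence cannot cancel; thus some second derivative of $g$ is nonzero at the origin and $g$ is nonconstant.

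For (b) $\Rightarrow$ (a), take the first integral $g$, which by the observation above is a real-analytic CR function. Theorem~\ref{thm:mainext}, whose only hypothesis is the rank condition, produces a unique holomorphic $F(z,w)$ near the origin with $g(z,\bar z) = F\bigl(z,\rho(z,\bar z)\bigr)$. As $g = F|_M$ is real-valued, $F$ is real-valued on $M$, so it remains only to establish $dF(0) \neq 0$, which then exhibits $F$ as a flattening function.

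This last step is the main obstacle, and it is exactly where the second-derivative hypothesis in (b) is needed. Exactly as before, reality of $g$ together with $\rho = O(\snorm{z}^2)$ forces $F_{z_j}(0) = 0$ for all $j$, so $dF(0) \neq 0$ reduces to $F_w(0) \neq 0$. Suppose instead that $F_w(0) = 0$. Then $F$ has no linear terms, so on $M$, where $w = \rho = O(\snorm{z}^2)$, every degree-two monomial of $F$ in $(z,w)$ other than the purely holomorphic part $z^t P z$ contributes only at order at least three; hence the degree-two part of $g$ equals the holomorphic quadratic $z^t P z$. But a holomorphic quadratic that is real-valued must vanish, so $P = 0$ and all second derivatives of $g$ vanish at the origin, contradicting (b). Therefore $F_w(0) \neq 0$ and the proof is complete.
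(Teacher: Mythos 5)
Your proof is correct and takes essentially the same route as the paper: (a) $\Rightarrow$ (b) by restricting the flattening function and examining the quadratic part, and (b) $\Rightarrow$ (a) by noting the first integral is CR, extending it via Theorem~\ref{thm:mainext}, and killing the case $F_w(0)=0$ because a real-valued holomorphic quadratic must vanish. The only notable (and welcome) difference is in (a) $\Rightarrow$ (b): the paper first normalizes $Q$ to be real-valued, invoking the necessary condition for flattening from \cite{DTZ}, whereas you avoid that external input by observing directly that the $\bar{z}$-dependence of $\Phi_w(0)\,Q$ cannot be cancelled by a holomorphic quadratic, which makes your argument self-contained.
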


A necessary condition for $M$ to be flattenable is that $A$
can be made real-valued after a linear change of coordinates
(see~\cite{DTZ}).
After the normalization making $C = B$, we can therefore also assume that
$Q$ is real-valued.  In this case, if $g$ exists, then
$g(z,\bar{z}) = \alpha Q(z,\bar{z}) + O(\snorm{z}^3)$ at the origin
for some nonzero real $\alpha$.

\begin{proof}
(a) $\Rightarrow$ (b) follows by restricting the flattening
holomorphic function $f$ to $M$.  Suppose that $Q$ is normalized to
be real-valued as we mentioned above.
It is not difficult to see that $f$,
when expanded at the origin,
cannot have linear terms in $z$ and that it must therefore have a linear
term in $w$ with coefficient $\alpha$.
Because $w = \rho(z,\bar{z})$, the function
$g(z,\bar{z}) = f\bigl(z,\rho(z,\bar{z})\bigr)
= \alpha Q(z,\bar{z}) + O(\snorm{z}^3)$ is the
required first integral.

To show (b) $\Rightarrow$ (a), suppose that $g$ is the given first integral.
Because $g$ is constant on the CR orbits of $M_{\mathit{CR}}$, it is a CR function.  It
therefore extends to a holomorphic function $f(z,w)$ such that $g(z,\bar{z})
= f\bigl(z,\rho(z,\bar{z})\bigr)$.
There are two possibilities for the
quadratic terms of $g$: either quadratic holomorphic terms in
$z$, or constant multiples of $Q$.  The quadratic terms must be
real-valued, and therefore there can be no holomorphic terms in $z$
that do not arise from a constant multiple of $Q$.
\end{proof}

The exceptional unknown case with nondegenerate $A$ is the manifold
\begin{equation}
w = \sabs{z_1}^2 - \sabs{z_2}^2 + \lambda ( z_1^2 + z_2^2 + \bar{z}_1^2 +
\bar{z}_2^2 ) + O(\snorm{z}^3)
\end{equation}
for $\lambda \geq \frac{1}{2}$.  The theorem says that, to prove or disprove
flattenability,
one needs to find or show the nonexistence of a real-analytic function
\begin{equation}
g(z,\bar{z}) = \sabs{z_1}^2 - \sabs{z_2}^2 + \lambda ( z_1^2 + z_2^2 + \bar{z}_1^2 +
\bar{z}_2^2 ) + O(\snorm{z}^3)
\end{equation}
that is constant on the CR orbits of $M_{\mathit{CR}}$.

\section{CR singular images of CR manifolds} \label{sec:CRimages}

Let us motivate this section by 
Example 1.6 in Ebenfelt--Rothschild~\cite{ER}.
Let $M' \subset \C^3$ be given by
\begin{equation}
M' = \left\{(z, w_1 , w_2 ) \in \C^3 : \Im w_1 = \frac{{\abs{z}}^2}{2},
\Im w_2 = \frac{{\abs{z}}^4}{2} \right\} .
\end{equation}
The submanifold $M'$ is a generic (and hence CR) real-analytic submanifold of finite type (in particular, not Levi-flat).
The submanifold $M'$ is taken to the CR singular submanifold 
\begin{equation}
M = \{(z_1, z_2, w) \in \C^3 : w = {(\bar{z}_2+i{\abs{z_1}}^2+{\abs{z_1}}^4 )}^2 \}
\end{equation}
via the holomorphic map
\begin{equation}
(z, w_1, w_2) \mapsto
\bigl(z, w_1 + iw_2 , {(w_1 - iw_2 )}^2 \bigr) .
\end{equation}
The map restricted to $M'$ is a diffeomorphism onto its image.
A natural problem is to classify
such CR singular submanifolds $M$.

Let us define this class of manifolds more abstractly.
Let $M \subset \C^m$ be a real-analytic submanifold.  We say
$M$ is a \emph{CR image}
if there exists a real-analytic vector bundle
$\sV \subset \C \otimes TM$ such that $\sV_p = T_p^{0,1} M$ for
all $p \in M_{CR}$.
In other words, the CR structure of $M_{CR}$ extends to an
abstract CR structure on $M$.
As real-analytic CR structures
are always integrable (Theorem 2.1.11 in \cite{BER:book}), we have 
the following proposition.

\begin{prop}
Let $M \subset \C^m$ be a connected real-analytic submanifold that is generic at some point.  Then the
following are equivalent.
\begin{enumerate}[(i)]
\item $M$ is a CR image.
\item For each $p \in M$ there are a neighborhood $U$ of $p$,
a real-analytic generic submanifold $M' \subset \C^{m}$,
and a CR immersion $\varphi \colon M' \to \C^{m}$ such that
$\varphi(M') = M \cap U$.
\end{enumerate}
\end{prop}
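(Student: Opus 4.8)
The plan is to prove the two implications separately; the only nonelementary ingredient is the cited fact (Theorem 2.1.11 in \cite{BER:book}) that a real-analytic CR structure is integrable, together with the standard local realization of such a structure as a generic submanifold.

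For (ii) $\Rightarrow$ (i) I would simply push the CR structure of $M'$ forward through $\varphi$. Shrinking $U$ so that $\varphi$ is an embedding onto $M \cap U$, set $\sV_{\varphi(q)} = d\varphi_q\bigl(T^{0,1}_q M'\bigr)$. Since $\varphi$ is a real-analytic immersion this defines a real-analytic subbundle of $\C \otimes TM$, and since $\varphi$ is a CR map we have $\sV_p \subseteq T^{0,1}_p M$ for every $p$. At a CR point $p$ the two spaces have the same dimension---this is where one uses that the manifolds of interest are generic at their CR points, so that the immersion preserves the CR dimension---and hence $\sV_p = T^{0,1}_p M$. Thus $\sV$ is the required real-analytic extension of the CR bundle across the CR singular locus.

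For (i) $\Rightarrow$ (ii) the idea is to realize the abstract CR manifold $(M,\sV)$ concretely and let $\varphi$ be the inclusion read through that realization. First I would observe that each ambient coordinate $z_j$, restricted to $M$, is a CR function for $(M,\sV)$: every section of $\sV$ annihilates $z_j|_M$ on the dense open set $M_{CR}$, where $\sV = T^{0,1}M$, and since both the (real-analytic) sections of $\sV$ and $z_j|_M$ are real-analytic, this identity propagates to all of $M$. Because $\sV$ is real-analytic it is integrable, so $(M,\sV)$ is locally CR-diffeomorphic, via some $\Psi \colon M' \to M$, to a generic real-analytic submanifold $M' \subset \C^m$ of the same CR dimension; the realization lands in the correct ambient dimension $m = \dim_\R M - \dim_{CR} M$ precisely because $M$ is generic at its CR points. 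Setting $\varphi = \iota \circ \Psi$, where $\iota \colon M \hookrightarrow \C^m$ is the inclusion, the components of $\varphi$ are the functions $z_j|_M \circ \Psi$, which are CR on $M'$ because $z_j|_M$ are CR on $(M,\sV)$ and $\Psi$ is a CR diffeomorphism. Hence $\varphi$ is a CR map, it is an immersion as the composite of a diffeomorphism with an embedding, and $\varphi(M') = \iota\bigl(\Psi(M')\bigr) = M \cap U$.

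The main obstacle is the realization step in (i) $\Rightarrow$ (ii). Two points carry the argument: that real-analyticity lets the CR condition (that the sections of $\sV$ kill the $z_j|_M$) spread from the dense CR locus to all of $M$, so that the inclusion becomes a genuine CR map for $(M,\sV)$; and that the integrable real-analytic structure $(M,\sV)$ embeds as a generic submanifold whose realization dimension matches the given $m$. Both rest on $M$ being generic at its CR points, which holds for the codimension-two submanifolds studied here. Once these are granted, the construction of $\varphi$ and the verification that it is a CR immersion onto $M \cap U$ are routine.
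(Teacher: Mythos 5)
Your proposal follows the same route as the paper's own: the paper in fact gives no detailed proof, deducing the proposition in one line from the integrability of real-analytic CR structures (the theorem of Baouendi--Ebenfelt--Rothschild it cites), which is precisely the realization step in your (i) $\Rightarrow$ (ii), with the pushforward argument for (ii) $\Rightarrow$ (i) left entirely implicit; your write-up fleshes out exactly that intended argument, and correctly. The one substantive point of comparison is the caveat you flag yourself: both of your directions use that $M$ is generic at its CR points, and this is a genuine hypothesis rather than a removable convenience. Without it the proposition as literally stated fails: for the complex line $M = \C \times \{0\} \times \{0\} \subset \C^3$, condition (i) holds trivially with $\sV = T^{0,1}M$, but (ii) is impossible, since a generic submanifold $M' \subset \C^3$ has real dimension at least $3$ and an immersion of it cannot have the $2$-dimensional $M \cap U$ as its image; equivalently, the local realization of $(M,\sV)$ lives in $\C^{N}$ with $N = \dim_\R M - \operatorname{rank} \sV$, and $N = m$ exactly when $M$ is generic at its CR points. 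In the paper's setting this hypothesis is automatic---a CR singular graph $w = \rho(z,\bar z)$ is generic wherever $\rho_{\bar z} \neq 0$, that is, at all of its CR points---so your proof covers every case the paper uses, and it makes explicit an assumption that the paper's statement leaves implicit.
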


In \cite{LMSSZ} it was proved that if a CR singular $M$ is a CR image, then
there exists a CR function that is not a restriction of a holomorphic
function.  Theorems~\ref{thm:mainext} and~\ref{thm:PolyExtn} then
give the following corollary.

\begin{cor} \label{cor:CRimages}
Let $(z,w) \in \C^n \times \C$ be the coordinates and,
near the origin, let $M \subset \C^{n+1}$ be a codimension-2 submanifold given by
\begin{equation}
w = \rho(z,\bar{z}) = 
z^* A z + \overline{z^t B z} + z^t C z +
E(z, \bar{z}),
\end{equation}
where $\rho$ is real-analytic,
$A,B,C$ are complex $n \times n$ matrices,  $B$ and $C$ are symmetric,
and $E$ is
$O(\snorm{z}^3)$.
If $M$ is a CR image, then the following hold.
\begin{enumerate}[(a)]
\item
$\rank \begin{bmatrix} A^* \\ B \end{bmatrix} \leq 1$.
\item
   Near the origin, $M$ is biholomorphically equivalent to exactly
   one of the following forms:
   \begin{enumerate}[(1)]
     \item $w = \bar{z}_1 z_2 + \bar{z}_1^2 + O(\snorm{z}^3)$,
     \item $w = \bar{z}_1 z_2 + O(\snorm{z}^3)$,
     \item $w = \sabs{z_1}^2 + a \bar{z}_1^2 + O(\snorm{z}^3)$, $a \geq 0$,
     \item $w = \bar{z}_1^2 + O(\snorm{z}^3)$,
     \item $w = O(\snorm{z}^3)$.
   \end{enumerate}
   Furthermore, examples exist for all five cases.
\end{enumerate}
\end{cor}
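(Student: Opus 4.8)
The plan is to combine the non-extension result of \cite{LMSSZ} with the extension Theorem~\ref{thm:mainext} to pin down the rank, and then to read off the normal form from Proposition~\ref{prop:equivofAandD}, finishing with a uniqueness argument and explicit examples.

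For part (a), I would argue by contradiction. Suppose $\rank \begin{bmatrix} A^* \\ B \end{bmatrix} \geq 2$. Then $\bar\partial Q \not\equiv 0$, so the quadric model is CR singular; since $\rho = O(\snorm{z}^2)$ forces all $\rho_{\bar z_j}$ to vanish at the origin (making it a CR singular point) while nearby points where some $\rho_{\bar z_j} \neq 0$ are CR points, the germ $M$ itself is CR singular. By \cite{LMSSZ}, a CR singular CR image carries a CR function that is not the restriction of any holomorphic function. But Theorem~\ref{thm:mainext} asserts that under $\rank \geq 2$ every real-analytic CR function on $M$ extends. This contradiction forces $\rank \begin{bmatrix} A^* \\ B \end{bmatrix} \leq 1$.

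For part (b), I start from $\rank \leq 1$. If the rank is $1$, Proposition~\ref{prop:equivofAandD} gives that the quadric model $w = Q$ is biholomorphically equivalent to one of the four quadrics (1)--(4); the change of coordinates there is linear in $z$ together with the absorption of $z^t C z$ (and a rescaling) into $w$. Such a change is linear in $z$ and biholomorphic in $(z,w)$, so it preserves the class $O(\snorm{z}^3)$ of the remainder; applying it to $M$ transforms $w = Q + E$ into one of the four forms with an $O(\snorm{z}^3)$ error, yielding cases (1)--(4). If the rank is $0$, then $A = B = 0$ and $Q = z^t C z$ is holomorphic, so absorbing it into $w$ leaves $w = O(\snorm{z}^3)$, which is case (5). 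For the ``exactly one'' claim, I would use that a biholomorphic germ equivalence fixing the origin induces, through its $2$-jet, an equivalence of the quadric models; the four quadrics are mutually inequivalent by Theorem~\ref{thm:PolyExtn}(d), and case (5) is distinguished from them by $\bar\partial Q \equiv 0$, so no two of the five classes coincide. Finally, cases (1)--(4) are realized by the pure quadrics themselves, which are CR images by Remark~\ref{remark:LFimages}, and case (5) is realized by $w = \bar z_1^3$: running the construction of Remark~\ref{remark:LFimages} with the cubic $(s-it)^3$ in place of a quadratic, the CR immersion $(s,t,\xi) \mapsto \bigl(s+it,\xi,(s-it)^3\bigr)$ of $\R^2 \times \C^{n-1}$ has this $M$ as image, and $\operatorname{span}\{\partial/\partial\bar z_2,\ldots,\partial/\partial\bar z_n\}$ extends $T^{0,1}$ across the singular set $\{z_1 = 0\}$, so $M$ is a CR singular CR image with vanishing quadratic part.

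The rank argument and the normalization are routine given the earlier results. The step that needs the most care is the uniqueness claim, since it relies on the equivalence class of the quadric model being a genuine biholomorphic invariant of the \emph{full} germ $M$, so that the higher-order terms cannot create extra equivalences among the five forms; I would justify this through the standard $2$-jet/normal-form analysis, and I would take care to verify that the case (5) example is simultaneously CR singular and a CR image rather than merely one of the two.
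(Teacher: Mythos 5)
Your proposal is correct and follows essentially the same route as the paper: part (a) by playing the non-extension result of \cite{LMSSZ} for CR singular CR images against Theorem~\ref{thm:mainext}, and part (b) via Proposition~\ref{prop:equivofAandD} applied to the quadric model, with the examples coming from Remark~\ref{remark:LFimages} for cases (1)--(4) and the map $(s,t,\xi)\mapsto\bigl(s+it,\xi,(s-it)^3\bigr)$ for case (5), exactly as in the paper. Your added care on the ``exactly one'' claim (invariance of the quadric model under the 2-jet of a biholomorphism) spells out a point the paper leaves implicit, but it is the same underlying argument resting on the mutual inequivalence in Theorem~\ref{thm:PolyExtn}(d).
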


The Ebenfelt--Rothschild example above is equivalent to case (4) in the corollary.

The quadratic exceptional cases from Theorem~\ref{thm:PolyExtn} are 
such images of $\R^2 \times \C^{n-1}$; see Remark~\ref{remark:LFimages}, giving examples for cases (1)--(4).
For the fifth case,
consider $w = \bar{z}_1^3$ for $M$, $M' = \R^2 \times \C^{n-1}$, and
$(s,t,\xi) \mapsto \bigl(s+it,\xi,{(s-it)}^3\bigr)$ for the map.
All five cases are therefore possible.

The corollary says that any CR singular CR image
is either a third-order perturbation of
an image of the Levi-flat $\R^2 \times \C^{n-1}$ or a third-order perturbation of
an image of $\C^n$ ($w=0$).


\def\MR#1{\relax\ifhmode\unskip\spacefactor3000 \space\fi%
  \href{http://www.ams.org/mathscinet-getitem?mr=#1}{MR#1}}

\begin{bibdiv}
\begin{biblist}

\bib{BER:book}{book}{
  author={Baouendi, M. Salah},
  author={Ebenfelt, Peter},
  author={Rothschild, Linda Preiss},
  title={Real submanifolds in complex space and their mappings},
  series={Princeton Mathematical Series},
  volume={47},
  publisher={Princeton University Press, Princeton, NJ},
  date={1999},
  pages={xii+404},
  isbn={0-691-00498-6},
  review={\MR{1668103}},
}

\bib{Bishop65}{article}{
   author={Bishop, Errett},
   title={Differentiable manifolds in complex Euclidean space},
   journal={Duke Math.\ J.},
   volume={32},
   date={1965},
   pages={1--21},
   issn={0012-7094},
   review={\MR{0200476}},
}

\bib{Burcea}{article}{
  author={Burcea, Valentin},
  title={A normal form for a real 2-codimensional submanifold in
         $\mathbb{C}^{N+1}$ near a CR singularity},
  journal={Adv.\ Math.},
  volume={243},
  year={2013},
  pages={262--295},
  review={\MR{3062747}},
}

\bib{Burcea2}{article}{
   author={Burcea, Valentin},
   title={On a family of analytic disks attached to a real submanifold
   $M\subset{\mathbb C}^{N+1}$},
   journal={Methods Appl.\ Anal.},
   volume={20},
   date={2013},
   number={1},
   pages={69--78},
   issn={1073-2772},
   review={\MR{3085782}},
}

\bib{Coffman}{article}{
   author={Coffman, Adam},
   title={CR singularities of real fourfolds in $\mathbb C^3$},
   journal={Illinois J. Math.},
   volume={53},
   date={2009},
   number={3},
   pages={939--981 (2010)},
   issn={0019-2082},
   review={\MR{2727363}},
}

\bib{DTZ}{article}{
   author={Dolbeault, Pierre},
   author={Tomassini, Giuseppe},
   author={Zaitsev, Dmitri},
   title={On boundaries of Levi-flat hypersurfaces in ${\mathbb C}^n$},
   language={English, with English and French summaries},
   journal={C.\ R.\ Math.\ Acad.\ Sci.\ Paris},
   volume={341},
   date={2005},
   number={6},
   pages={343--348},
   issn={1631-073X},
   review={\MR{2169149}},
}

\bib{DTZ2}{article}{
   author={Dolbeault, Pierre},
   author={Tomassini, Giuseppe},
   author={Zaitsev, Dmitri},
   title={Boundary problem for Levi flat graphs},
   journal={Indiana Univ.\ Math.\ J.},
   volume={60},
   date={2011},
   number={1},
   pages={161--170},
   issn={0022-2518},
   review={\MR{2952414}},
} 

\bib{ER}{article}{
   author={Ebenfelt, Peter},
   author={Rothschild, Linda. P.},
   title={Images of real submanifolds under finite holomorphic mappings},
   journal={Comm.\ Anal.\ Geom.},
   volume={15},
   date={2007},
   number={3},
   pages={491--507},
   review={\MR{2379802}},
}

\bib{FangHuang}{article}{
   author={Fang, Hanlong},
   author={Huang, Xiaojun},
   title={Flattening a non-degenerate CR singular point of real codimension
   two},
   journal={Geom.\ Funct.\ Anal.},
   volume={28},
   date={2018},
   number={2},
   pages={289--333},
   issn={1016-443X},
   review={\MR{3788205}},
}

\bib{Garrity}{article}{
   author={Garrity, Thomas},
   title={Global structures on CR manifolds via Nash blow-ups},
   note={Dedicated to William Fulton on the occasion of his 60th birthday},
   journal={Michigan Math. J.},
   volume={48},
   date={2000},
   pages={281--294},
   issn={0026-2285},
   review={\MR{1786491}},
}

\bib{Gong94:duke}{article}{
   author={Gong, Xianghong},
   title={Normal forms of real surfaces under unimodular transformations
   near elliptic complex tangents},
   journal={Duke Math.\ J.},
   volume={74},
   date={1994},
   number={1},
   pages={145--157},
   issn={0012-7094},
   review={\MR{1271467}},
}

\bib{GongLebl}{article}{
   author={Gong, Xianghong},
   author={Lebl, Ji\v{r}\'\i},
   title={Normal forms for CR singular codimension-two Levi-flat submanifolds},
   journal={Pacific J.\ Math.},
   volume={275},
   date={2015},
   number={1},
   pages={115--165},
   review={\MR{3336931}},
}

\bib{Harris}{article}{
   author={Harris, Gary Alvin},
   title={The traces of holomorphic functions on real submanifolds},
   journal={Trans.\ Amer.\ Math.\ Soc.},
   volume={242},
   date={1978},
   pages={205--223},
   issn={0002-9947},
   review={\MR{0477120}},
}

\bib{HuangKrantz95}{article}{
   author={Huang, Xiaojun},
   author={Krantz, Steven G.},
   title={On a problem of Moser},
   journal={Duke Math.\ J.},
   volume={78},
   date={1995},
   number={1},
   pages={213--228},
   issn={0012-7094},
   review={\MR{1328757}},
}

\bib{HuangYin09}{article}{
   author={Huang, Xiaojun},
   author={Yin, Wanke},
   title={A Bishop surface with a vanishing Bishop invariant},
   journal={Invent.\ Math.},
   volume={176},
   date={2009},
   number={3},
   pages={461--520},
   issn={0020-9910},
   review={\MR{2501295}},
}

\bib{HuangYin09:codim2}{article}{
   author={Huang, Xiaojun},
   author={Yin, Wanke},
   title={A codimension two CR singular submanifold that is formally
   equivalent to a symmetric quadric},
   journal={Int.\ Math.\ Res.\ Not.\ IMRN},
   date={2009},
   number={15},
   pages={2789--2828},
   issn={1073-7928},
   review={\MR{2525841}},
}

\bib{HuangYin:flattening1}{article}{
   author={Huang, Xiaojun},
   author={Yin, Wanke},
   title={Flattening of CR singular points and analyticity of the local hull
   of holomorphy I},
   journal={Math. Ann.},
   volume={365},
   date={2016},
   number={1-2},
   pages={381--399},
   issn={0025-5831},
   review={\MR{3498915}},
}

\bib{HuangYin:flattening2}{article}{
   author={Huang, Xiaojun},
   author={Yin, Wanke},
   title={Flattening of CR singular points and analyticity of the local hull
   of holomorphy II},
   journal={Adv. Math.},
   volume={308},
   date={2017},
   pages={1009--1073},
   issn={0001-8708},
   review={\MR{3600082}},
}

\bib{KenigWebster:82}{article}{
   author={Kenig, Carlos E.},
   author={Webster, Sidney M.},
   title={The local hull of holomorphy of a surface in the space of two
   complex variables},
   journal={Invent.\ Math.},
   volume={67},
   date={1982},
   number={1},
   pages={1--21},
   issn={0020-9910},
   review={\MR{664323}},
}

\bib{LMSSZ}{article}{
   author={Lebl, Ji{\v{r}}{\'{\i}}},
   author={Minor, Andr{\'e}},
   author={Shroff, Ravi},
   author={Son, Duong},
   author={Zhang, Yuan},
   title={CR singular images of generic submanifolds under holomorphic maps},
   journal={Ark.\ Mat.},
   volume={52},
   date={2014},
   number={2},
   pages={301--327},
   issn={0004-2080},
   review={\MR{3255142}},
}

\bib{crext1}{article}{
   author={Lebl, Ji\v r\'\i },
   author={Noell, Alan},
   author={Ravisankar, Sivaguru},
   title={Extension of CR functions from boundaries in ${\mathbb C}^n\times{\mathbb R}$},
   journal={Indiana Univ.\ Math.\ J.},
   volume={66},
   date={2017},
   number={3},
   pages={901--925},
   issn={0022-2518},
   review={\MR{3663330}},
}

\bib{crext2}{article}{
   author={Lebl, Ji\v r\'\i },
   author={Noell, Alan},
   author={Ravisankar, Sivaguru},
   title={Codimension two CR singular submanifolds and extensions of CR
   functions},
   journal={J.\ Geom.\ Anal.},
   volume={27},
   date={2017},
   number={3},
   pages={2453--2471},
   issn={1050-6926},
   review={\MR{3667437}},
}

\bib{crext3}{article}{
   author={Lebl, Ji\v r\'\i },
   author={Noell, Alan},
   author={Ravisankar, Sivaguru},
   title={On Lewy extension for smooth hypersurfaces in $\C^n\times\R$},
   journal={Trans.\ Amer.\ Math.\ Soc.},
   volume={371},
   year={2019},
   pages={6581--6603},
   review={\MR{3937338}}
}

\bib{crext4}{article}{
   author={Lebl, Ji\v r\'\i},
   author={Noell, Alan},
   author={Ravisankar, Sivaguru},
   title={On the Levi-flat Plateau problem},
   journal={Complex Anal.\ Synerg.},
   volume={6},
   date={2020},
   number={1},
   pages={Paper No. 3, 15},
   review={\MR{4052029}},
}

\bib{Moser85}{article}{
   author={Moser, J{\"u}rgen K.},
   title={Analytic surfaces in ${\bf C}^2$ and their local hull of
   holomorphy},
   journal={Ann.\ Acad.\ Sci.\ Fenn.\ Ser.\ A I Math.},
   volume={10},
   date={1985},
   pages={397--410},
   issn={0066-1953},
   review={\MR{802502}},
}

\bib{MoserWebster83}{article}{
   author={Moser, J{\"u}rgen K.},
   author={Webster, Sidney M.},
   title={Normal forms for real surfaces in ${\bf C}^{2}$ near complex
   tangents and hyperbolic surface transformations},
   journal={Acta Math.},
   volume={150},
   date={1983},
   number={3--4},
   pages={255--296},
   issn={0001-5962},
   review={\MR{709143}},
}

\bib{Severi:31}{article}{
   author={Severi, F.},
   title={Risoluzione generale del problema di Dirichlet per le funzioni biarmoniche},
   journal={Atti Accad. Naz. Lincei, Rend., VI. Ser.},
   volume={13},
   year={1931},
   pages={795--804}
}


\bib{Slapar:16}{article}{
   author={Slapar, Marko},
   title={On complex points of codimension 2 submanifolds},
   journal={J.\ Geom.\ Anal.},
   volume={26},
   date={2016},
   number={1},
   pages={206--219},
   issn={1050-6926},
   review={\MR{3441510}},
}

\bib{Whitney:book}{book}{
  author={Whitney, Hassler},
  title={Complex analytic varieties},
  publisher={Addison-Wesley Publishing Co., Reading, Mass.-London-Don
  Mills, Ont.},
  date={1972},
  pages={xii+399},
  review={\MR{0387634}},
}

\end{biblist}
\end{bibdiv}


\end{document}